\numberwithin{equation}{section}
\def \IN{\mathbb N}
\def \IZ{\mathbb Z}
\def \IR{\mathbb R}
\newcommand{\art}{\ar@{-}}
\newtheorem{thm}{Theorem}[section]
\theoremstyle{definition} \newtheorem{defin}[thm]{Definition}
\theoremstyle{plain} \newtheorem{lemma}[thm]{Lemma}
\theoremstyle{plain} \newtheorem{prop}[thm]{Proposition}
\theoremstyle{remark} \newtheorem{rem}[thm]{Remark}
\theoremstyle{plain} \newtheorem{cor}[thm]{Corollary}
\theoremstyle{remark} 
\newcommand{\trm}[1]
{\textrm{#1}}
\newcommand{\SL}{\textrm{SL}}
\newcommand{\Orb}{\mathcal{O}}
\newcommand{\orb}{\mathfrak{o}}
\renewcommand{\d}[1]{\ensuremath{\operatorname{d}\!{#1}}}
\newcommand{\GSp}{\textrm{GSp}}
\newcommand{\mat}[4]
{\left(\begin{array}{cc}#1 & #2 \\ #3 & #4\end{array}\right)}
\newcommand{\Sp}
{\trm{Sp}}
\newcommand{\SP}
{\mathbb{S}}
\newcommand{\prim}
{\textrm{prim}}
\newcommand{\vol}
{\trm{vol}}
\newcommand{\tr}
{\intercal}
\newcommand{\diag}
{\trm{diag}}
\newcommand{\stab}
{\trm{Stab}}
\author{Jayadev S. Athreya and Ioannis Konstantoulas}
\title{Discrepancy of general symplectic lattices}
\begin{document}
\maketitle
\begin{abstract}
We consider random lattices taken from the general symplectic ensemble and count the number of lattice points of a typical lattice in nested families $B_t$ of certain Borel sets.  Our main result is that for almost every general symplectic lattice, the discrepancy $D(\Lambda,B_t)$ of the lattice point count with respect to the volumes is $O(\vol(B_t)^{-\delta})$.  This extends work of W. Schmidt who gave similar discrepancy bounds for the space of all lattices in $\IR^n$.
\end{abstract}
\section{Introduction}
\label{sec:introduction}

Let $V$ be a $2n$-dimensional vector space over $\IR$, $\sigma = \langle\mbox{ },\mbox{ }\rangle$ a symplectic form on $V$ and $\{e^i,f^i\}_{(i=1,\cdots n)}$ a symplectic basis for $\sigma$.
We will write $v\in V$ as $v = (x,y)$ where $x= \sum x_ie^i$ and $y=\sum y_if^i$ with respect to this basis.
By Darboux's theorem, we may take $\sigma$ to be the standard symplectic form and $e^i,f^i$ the standard basis of $\IR^{2n}$.  
We will denote the inner product on $V$ by $[\mbox{ },\mbox{ }]$ and define the linear map $\tau(x,y) = (y,-x)$; then $\langle v^1,v^2\rangle = [v^1,\tau(v^2)]$.

It is well known that $G =\Sp(2n,\IR)$ acts transitively on symplectic bases of $V$ which can then be identified with the columns of a $g\in G$.  
Thus from a given $g$ we get a symplectic lattice $\Lambda_g = g\IZ^{2n}$ and all such lattices are obtained this way.
The stabilizer of $\IZ^{2n}$ is $\Gamma = \Sp(2n,\IZ)$, giving the space of symplectic lattices the structure of a finite volume homogeneous space $\SP= \SP_V = G/\Gamma$.
Given a Borel set $B\subset V$, we are interested in counting the number of points in $\Lambda_\prim\cap B$ when $\Lambda$ is picked randomly according to the probability Haar measure from $\SP$ and $\Lambda_\prim$ is the set of primitive points in $\Lambda$.

By a result in \cite{MoSa} inspired by Siegel's theorem from \cite{Sieg} we know that the average number of primitive lattice points is 
\begin{equation}
  \label{eq:1}
  \int_{\SP} \sum_{\lambda\in \Lambda_{\prim}}\chi_B(\lambda)\, d\mu(\Lambda) = \frac{\trm{vol}(B)}{\zeta(2n)}.
\end{equation}
Indeed, by the transitivity of the action of $\Sp(2n,\IR)$ on non-zero vectors, one can exhibit $\IR^{2n}$ as a homogeneous space and Lebesgue measure as the projected Haar measure on the space.
The normalizing constant comes from giving $\SP$ measure $1$ and the fact that the density of primitive integral vectors in $\IZ^{2n}$ is $\frac{1}{\zeta(2n)}$.

From the probabilistic point of view, it is then natural to ask about the variance around this mean:
\begin{equation}
  \label{eq:2}
  V(B) = \left|\int_{\SP} \left(\sum_{\lambda\in \Lambda_{\prim}}\chi_B(\lambda)\right)^2\, d\mu(\Lambda)-\left(\frac{\trm{vol}(B)}{\zeta(2n)}\right)^2\right|.
\end{equation}

For the case $n=1$ and for the full space of unimodular lattices $\SL(m,\IR)/\SL(m,\IZ)$, square root error estimates have been provided using Rogers's formulas (which exhibit the moments of the Siegel transform of a function in terms of data related to the moments of the function itself) and the machinery of Eisenstein series on $\SL(2,\IR)/\SL(2,\IZ)$.  Recently significant progress has been made in the situation of lattices taken from the rank $1$ homogeneous spaces $\trm{SO}(n,1)(\IR)/\trm{SO}(n,1)(\IZ)$ by Shucheng Yu in \cite{Yu} again using spectral analysis of certain Eisenstein series for those groups.

Rogers's formulas seem to be specific to the full space of lattices and the Eisenstein series approach, although promising for $\SP$, are much harder to work with than the rank $1$ case.
In this work, we prove a Rogers type formula for the symplectic group and apply it to give estimates for the corresponding variance $V(B)$ over a slightly bigger space, namely the unit cone over $\SP$ in the infinite measure space $\GSp^+(2n,\IR)/\GSp^+(2n,\IZ)$.   After we completed our work, Kelmer and Yu in \cite{KelYu} used the representation theory of $\Sp(n,\IR)$ to give $L^2$ norm bounds for the non-primitive symplectic lattice point count; their discrepancy results are similar to ours but utilize very different tools. 

To state our main result we need the notion of a Siegel transform, defined in \ref{Siegel-def} as the sum of the input function over all primitive lattice points of a given lattice.  We also need the following data:
\begin{defin}
For positive integers $s$ and $d|s$ let $q=\frac{s}{d}$ and $$a(s) = \frac{1}{\zeta(2n)s^{2n-1}}\sum_{d|s}\phi(d)q^{2m+1}\prod_{p|q}\left(1-\frac{1}{p^{2m}}\right).$$  For arbitrary integer $s$ let $$G(s) = \int_{\IR^{2n}}\int_{\IR^{2n-1}}f(x)f\left(sy^*+\sum_i t_iy_i(x)\right)\,dt\,dx$$ where $y^*=y^*(x)$ satisfies $\langle x,y^*(x)\rangle=1$ and the $y_i$ span the hyperplane $\langle x,y\rangle=0$.
  \end{defin}
  Our Rogers type formula takes the form

  \begin{thm}
 For any integrable $f$ on $\IR^{2n}$ we have
 $$\int_{S_n}\widehat{f}^2(\Lambda)d\Lambda = \sum_{s=-\infty}^\infty a(|s|) G(s) + \sum_{(k,l)=1}\int_{\IR^{2n}}f(kx)f(lx)\,dx.$$
 
  \end{thm}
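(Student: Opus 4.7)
\medskip
\noindent\textbf{Proof plan.}
The plan is to imitate Rogers's derivation of the second moment formula for $\SL(n,\IR)/\SL(n,\IZ)$, adapted to the symplectic setting. Writing out the square,
$$\widehat{f}^{2}(\Lambda)=\sum_{v,w\in\Lambda_{\prim}}f(v)f(w),$$
and bringing the integral inside, the task reduces to understanding the diagonal action of $\Gamma=\Sp(2n,\IZ)$ on pairs of primitive vectors in $\IZ^{2n}$ and then unfolding the integral over $S_{n}$ orbit by orbit, using the fact that Haar measure on $G/\Gamma$ pushes forward to a constant multiple of Lebesgue measure on $\IR^{2n}$ under the primitive-vector map $\Lambda\mapsto v$ --- the same mechanism behind \eqref{eq:1}.

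The orbit classification splits into two types. Type (a): \emph{proportional pairs}, $v=ku$ and $w=lu$ with $u$ primitive and $(k,l)=1$. By the transitivity of $\Gamma$ on primitive vectors (the symplectic Witt extension theorem), each coprime pair $(k,l)$ contributes exactly one orbit, whose unfolded contribution is $\int_{\IR^{2n}}f(kx)f(lx)\,dx$. Type (b): \emph{linearly independent pairs}, where the invariants of the diagonal $\Gamma$-action are the symplectic pairing $s=\langle v,w\rangle$ together with the elementary-divisor pattern of $\IZ v+\IZ w$ inside its saturation in $\IZ^{2n}$. For fixed $s$, a Smith normal form analysis enumerates these patterns, and a Möbius inversion accounts for the primitivity conditions on $v$, $w$, and their joint span; the resulting orbit count is precisely $a(|s|)$, in which the factor $\phi(d)$ reflects the residue classes determining a primitive generator on the sublattice and the Euler-product correction is the local primitivity condition at each prime dividing the reduced index $q=s/d$.

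For each orbit of type (b) with fixed pairing $s$, I would then unfold the integral via the Witt parameterization of the variety $\{(x,y):\langle x,y\rangle=s\}$, namely $(x,y)\leftrightarrow (x,\,sy^{*}(x)+\sum_{i}t_{i}y_{i}(x))$ where $y^{*}(x)$ is a distinguished section of $\langle x,\cdot\rangle=1$ and $y_{1}(x),\dots,y_{2n-1}(x)$ span the symplectic hyperplane $\langle x,\cdot\rangle=0$. The Jacobian of this parameterization is symplectically invariant, so the single-orbit contribution reduces to exactly $G(s)$; summing over $s\in\IZ$ with multiplicity $a(|s|)$ yields the first sum in the identity, and Type (a) supplies the second.

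The main obstacle is not the measure-theoretic unfolding, which is routine once the orbit structure is understood, but the arithmetic content of Step (b): establishing the sharp form of Witt's theorem for $\Sp(2n,\IZ)$ that classifies independent primitive pairs up to $\Gamma$ by the pair $(s,\text{elementary-divisor type})$, and then carrying out the divisor-sum bookkeeping that collapses the count into the closed form $a(s)$. Both are purely combinatorial/arithmetic steps, independent of $f$, and once they are in hand the remaining integration goes through by standard orbit-unfolding in the style of Rogers.
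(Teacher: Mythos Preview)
Your overall outline---expand the square, sort pairs by $\Gamma$-orbit, unfold each orbital integral---matches the paper. But there is a genuine gap in how you think the coefficient $a(|s|)$ arises, and it would derail the computation.

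You assert that for fixed $s$ ``the resulting orbit count is precisely $a(|s|)$'' and that ``the single-orbit contribution reduces to exactly $G(s)$.'' Neither is correct. First, $a(s)$ is not an integer (it carries a factor $1/\zeta(2n)$), so it cannot be an orbit count. Second, and more substantively, the contribution of a single orbit $(s,d,a)$ is \emph{not} $G(s)$: unfolding $\int_{G/\stab_\Gamma(s,d,a)}$ requires passing through the intermediate quotient $G/\Sp(2m,\IR)$, and what remains is
\[
\frac{[\Sp(2m,\IZ):\Gamma_1(s,d,a)]}{s^{2n-1}\,\zeta(2n)}\;G(s),
\]
where $\Gamma_1(s,d,a)\subset\Sp(2m,\IZ)$ is the congruence subgroup conjugate to the integral stabilizer. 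The $s^{2n-1}$ comes from rescaling the fibre coordinate on the affine hypersurface $\langle x,y\rangle=1$ to $\langle x,y\rangle=s$, and the $\zeta(2n)$ from the volume normalisation of the embedded $\Sp(2m,\IR)/\Sp(2m,\IZ)$. The Euler-type factor $q^{2m+1}\prod_{p\mid q}(1-p^{-2m})$ in the definition of $a(s)$ is exactly this stabilizer index, not a M\"obius correction for primitivity; computing it requires identifying $\Gamma_1$ explicitly as a congruence subgroup of $\Sp(2m,\IZ)$ modulo $q^2$ and counting points in a Heisenberg-type extension over $\IZ/q\IZ$. Your plan skips this entirely.

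So the arithmetic heart of the formula is not Smith normal form plus M\"obius inversion on the orbit side, but rather: (i) classify orbits by $(s,d,a)$ with $d\mid s$ and $a\in(\IZ/d\IZ)^{*}$; (ii) for each, compute $[\Sp(2m,\IZ):\Gamma_1]$; (iii) sum the weighted orbital integrals over $d\mid s$ and $a$, producing $\phi(d)$ copies of the index and hence the convolution $\phi\!*\!X$ inside $a(s)$. A minor side remark: your description of Type~(a) is also off---if $v,w$ are both primitive and proportional then $w=\pm v$, so not all coprime $(k,l)$ can occur in the way you describe.
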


The main application we consider in this paper is to estimate the discrepancy of primitive lattice points

\begin{equation}
  \label{eq:3}
D(B,\Lambda) = \left|\frac{(\#\Lambda_\prim\cap B)}{\vol(B)} - \frac{1}{\det(\Lambda)\zeta(2n)}\right|  
\end{equation}
for almost every $\Lambda$ in $\GSp(2n,\IR)/\GSp(2n,\IZ)$ (with respect to any measure equivalent to Haar).

This result is in the spirit of the seminal work by Schmidt (\cite{Schm}) on discrepancy estimates for the space of all lattices.
Schmidt uses his investigation of \eqref{eq:2} to obtain the bound $$D(B_m,\Lambda)=O(\vol(B_m)^{-1/2}\log^k(\vol(B_m))$$ for almost every lattice $\Lambda$ in $\textrm{GL}(n,\IR)/\textrm{GL}(n,\IZ)$, where $B_m$ is a nested sequence of Borel sets of volumes approaching infinity.  In fact, Schmidt shows that such a sequence can be embedded in a continuous family $B_t$ with the property that the values $\vol(B_t)$ range over all non-negative real numbers; here we will make this assumption at the onset in order to simplify our presentation, although our results hold more generally.  

The families $B_t$ to which our results are applicable are restricted because of the way certain symplectic integrals need to be bounded.  As a special case of our results, we give the analogue of Schmidt's result for a continuous family of sets $B_t$, $t>0$ with the property that the bulk of their volumes $v_t$, up to $v_t^{\epsilon'}$ for some uniform $\epsilon'$, lie in symplectic ellipsoids of the form $g_tB(0,v_t^{\frac{1}{2n}})$ for a family $g_t\in\Sp(2n,\IR)$ and the function $t\mapsto v_t$ has range $[v_0,\infty)$ for some $v_0\geq 0$.

More generally, the condition we need to impose on the Borel sets $B_t$ is that for every $\delta>0$ there exists an $\epsilon > 0$ such that
\begin{equation}
  \label{eq:37}
  \int_{B_t}\int_{B_t}|\langle x,y\rangle|_+^{-\delta}dx\,dy \leq Cm(B_t)^{2-\epsilon}.
\end{equation}
where $|A|_+ = \max\{1,|A|\}$.

As long as this condition is satisfied, the family $B_t$ is admissible for our bounds and we get a power savings with exponent depending explicitly on $\epsilon$ and $\delta$.  It must be noted that for $n=1$ all Borel sets satisfy this inequality; it is not clear to us if this holds true in higher dimensions as well, but for any natural family of sets this hypothesis holds with explicit $\epsilon$.

Our main theorem then states:
\begin{thm}
  Let $B_t$ be a continuous family of Borel sets satisfying \eqref{eq:37}.  Then there exists $\delta'>0$ depending only on the dimension and $\epsilon$ above such that for Haar-almost every\footnote{Almost every makes sense for any equivalent measure to the Haar on $\GSp(2n,\IR)/GSp(2n,\IZ)$.} lattice $\Lambda\in \GSp^+(2n,\IR)/\GSp(2n,\IZ)$, we have
  \begin{equation}
    \label{eq:41}
    D(B_t,\Lambda)\leq C(\Lambda, n,\delta')\vol(B_t)^{-\delta'}
  \end{equation}
for all sufficiently large $t\geq t_0(\Lambda)$.
\end{thm}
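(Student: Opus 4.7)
The plan follows the Rogers--Schmidt template. First I bound the second moment of the primitive-point count using the Rogers-type formula of Theorem 1.1; then Chebyshev's inequality combined with Borel--Cantelli produces an almost-sure discrepancy bound along a suitably chosen sequence of times, and a sandwich argument using the continuity of the family $B_t$ interpolates to all sufficiently large $t$.

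The core estimate is a variance bound on $\SP = G/\Gamma$. I apply Theorem 1.1 to $f = \chi_{B_t}$, so that $\widehat{f}(\Lambda) = \#(\Lambda_\prim \cap B_t)$. Subtracting the squared mean $(\vol(B_t)/\zeta(2n))^2$ and identifying that main term inside the expansion leaves the coprime sum and the $s$-sum as error. The coprime sum $\sum_{(k,l)=1}\int \chi_{B_t}(kx)\chi_{B_t}(lx)\,dx$ is immediately $O(\vol(B_t))$, since the diagonal $k=l=1$ contributes $\vol(B_t)$ and each other coprime pair contributes at most $\max(k,l)^{-2n}\vol(B_t)$. Each hyperplane integral $G(s)$ is an integral of $\chi_{B_t}(x)\chi_{B_t}(y)$ along the slice $\{\langle x,y\rangle = s\}$, and the polynomial weights $a(|s|)$ let us recombine $\sum_s a(|s|) G(s)$ as a double integral of the form $\int_{B_t}\int_{B_t} |\langle x,y\rangle|_+^{-\delta}\,dx\,dy$ for some $\delta>0$ absorbed from the coefficient growth. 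Hypothesis \eqref{eq:37} then yields $V_\SP(B_t) \leq C\vol(B_t)^{2-\epsilon}$. To transfer to $\GSp^+(2n,\IR)/\GSp(2n,\IZ)$ I disintegrate along the similitude character: on the scaling slice with covolume $c^{2n}$, a lattice sees $B_t$ exactly as a unimodular symplectic lattice would see $c^{-1}B_t$, so the bound transfers with uniform constants on any compact range of scales, which is enough for an almost-sure statement with respect to any measure equivalent to Haar.

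Chebyshev then gives
\begin{equation*}
\mu\{\Lambda : D(B_t,\Lambda) > \vol(B_t)^{-\delta'}\} \leq \frac{V_\SP(B_t)}{\vol(B_t)^{2-2\delta'}} \leq C \vol(B_t)^{-(\epsilon - 2\delta')}.
\end{equation*}
Choosing $0 < \delta' < \epsilon/2$ and a sequence of times $t_n$ along which $\vol(B_{t_n})$ grows fast enough to make the right-hand side summable, Borel--Cantelli yields $D(B_{t_n},\Lambda) \leq \vol(B_{t_n})^{-\delta'}$ for all $n \geq n_0(\Lambda)$ and almost every $\Lambda$. Interpolation between consecutive $t_n$'s uses the nesting $B_{t_n} \subset B_t \subset B_{t_{n+1}}$ and the closeness of $\vol(B_{t_{n+1}})$ to $\vol(B_{t_n})$ to control the count at intermediate $t$; calibrating the density of the $t_n$'s against $\delta'$ preserves a power-saving discrepancy bound, at the cost of a slightly smaller exponent.

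The main obstacle is the variance bound itself: cleanly isolating $(\vol(B_t)/\zeta(2n))^2$ inside the Rogers-type expansion and then expressing what remains as a symplectic integral of the form \eqref{eq:37}. The weights $a(|s|)$ grow polynomially in $|s|$, and since the $G(s)$ themselves decay only through the symplectic pairing of points in $B_t$, the collapse of $\sum_s a(|s|) G(s)$ into a double integral against $|\langle x,y\rangle|_+^{-\delta}$ must be done carefully. Admissibility \eqref{eq:37} is precisely the hypothesis that turns this double integral into a power-saving bound, and the final $\delta'$ is determined by tracking how $\epsilon$, $\delta$, the polynomial degree of $a$, and the dimension $n$ combine through the Chebyshev and interpolation steps.
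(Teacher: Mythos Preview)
Your overall architecture (variance bound $\to$ Chebyshev $\to$ Borel--Cantelli along a sparse subsequence $\to$ sandwich interpolation $\to$ dilation to all of $\GSp^+$) matches the paper exactly. The gap is in the variance step, where you reverse the logical order relative to the paper and in doing so skip the one genuinely new idea.

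You propose to establish the variance bound first on $\SP=\Sp(2n,\IR)/\Sp(2n,\IZ)$ and then ``transfer'' it to $\GSp^+$ by disintegrating along the similitude character. The paper does the opposite, and for a reason it states explicitly in Section~\ref{sec:stieltj-integr-discr}: working directly on $\SP$ one is left with the discrete expression $\sum_s a(|s|)G(s)$, and the authors could not bound the difference $C(f)-\bigl(\vol(B)/\zeta(2n)\bigr)^2$ in the required generality (see the remark preceding \eqref{eq:26} and the error formula \eqref{eq:27}). Your sentence ``the polynomial weights $a(|s|)$ let us recombine $\sum_s a(|s|)G(s)$ as a double integral of the form \eqref{eq:37}'' is exactly the step that is missing a mechanism. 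Note also that the $a(|s|)$ do not grow polynomially: by Lemma~\ref{bnd} one has $(\phi*X)(s)\le s^{2n-1}$, so $a(s)\le 1/\zeta(2n)$; the difficulty is not growth but the oscillation of $a(|s|)$ around its mean $1/\zeta(2n)^2$ together with the fractional-part term $\int a(|s|)G(s)\,d\{s\}$ in \eqref{eq:27}.

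What actually produces the bound in the paper is the extra integration over the similitude parameter $\nu\in(0,1]$ in the cone $C$. After the change of variables $u=s\nu$ (see the proof of Proposition~\ref{main_bound}), the $s$-sum becomes the tail sum $u^{n-1}\sum_{s>|u|}a(s)/s^n$, which by a summation-by-parts argument parallel to Proposition~\ref{asym} equals $1/\zeta(2n)^2+O(|u|_+^{-\delta})$. It is precisely this averaging that converts the arithmetic error into the symplectic kernel $|\langle x,y\rangle|_+^{-\delta}$ to which hypothesis \eqref{eq:37} applies. So the passage to $\GSp^+$ is not a transfer of an already-proved $\SP$-estimate; it is the device that makes the estimate provable at all. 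Once you have Proposition~\ref{main_bound} on $C$, your Chebyshev/Borel--Cantelli/interpolation outline is fine and coincides with Section~\ref{sec:from-mean-square}.
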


Although we are inspired by Schmidt's work and follow his strategy, the analysis is considerably more complicated in the case of $\Sp(2n,\IR)$ for $n>1$; orbits have complicated stabilizers which necessitates use of the analytic continuation of Dirichlet series of multiplicative functions and some group theoretic considerations.
We begin by looking at the second moment of the Siegel transform of $f$, and using an unfolding technique we are led to classify the structure of $\Sp(2n,\IZ)$-orbits on $(\IZ^{2n}_\prim)^2$.  We then link those to certain congruence subgroups of $\Sp(2n,\IZ)$ whose indexes are involved in a multiplicative function that we can control on average.  We then decouple the geometric part of the second moment from the arithmetic, analyze them both and combine the pieces of information to give the bound.

\section{Analysis of the second moment}
\label{sec:analys-second-moment}

In order to extract the main and error terms from the second moment of the (primitive) Siegel transform
\begin{equation}\label{Siegel-def}
  \widehat{f}(\Lambda) = \sum_{\lambda\in\Lambda_\prim}f(\lambda)
\end{equation}
we will regroup the sum into orbits of the diagonal action of $\Sp(2n,\IZ)$ on primitive pairs of integral vectors.
We will use the straightforward steps of the regrouping as an opportunity to introduce notation and make our conventions clear.

Let $F$ be a (fixed once and for all) fundamental domain of $\Sp(2n,\IR)$ for the action of $\Sp(2n,\IZ)$, $d\mu$ the Haar measure that gives total measure $1$ to $F$.
We will occasionally be sloppy and write $\Sp(2n,\IR)/\Sp(2n,\IZ)$ for $F$; any homogeneous space that we will write down will be either a discrete tiling by $F$ with the obvious counting measure on the quotient, or an embedded subdomain in $F$ with the normalized Haar measure induced by restricting the Lie algebra directions to the homogeneous space defining the subdomain.

Now we can proceed to expand and regroup the square of the Siegel transform:
\begin{eqnarray*}
 \left(\widehat{f}(g\IZ^{2n})\right)^2&=&\left(\sum_{z\in\IZ^{2n}_{\trm{\prim}}}f(g z)\right)^2\\ &=& \sum_{(u,v)\in\IZ^{2n}_{\trm{\prim}}\times \IZ^{2n}_{\trm{\prim}}}f(gu)f(gv)\nonumber\\
&=& \sum_{\orb\in \Orb}\sum_{(u,v)\in \orb}f(gu)f(gv)\\
&=& \sum_{\orb\in\Orb}\sum_{\gamma \in \Sp(2n,\IZ)/\stab_\Gamma(\orb)}f(g\gamma u_0)f(g\gamma v_0).
\end{eqnarray*}
Here $\Orb$ denotes the set of orbits of $\Gamma=\Sp(2n,\IZ)$ acting diagonally on $\IZ^{2n}_\prim\times \IZ^{2n}_\prim$ and  $\stab_\Gamma(\orb)$ is the stabilizer in $\Gamma$ of an arbitrary representative $(u_0,v_0)$ of the orbit $\orb$ (we will pick convenient representatives later).

Now we pass the integration over the fundamental domain $F$ down to the level of individual orbits to get
\begin{eqnarray*}
 \int_F\left(\widehat{f}(g\IZ^{2n})\right)^2\,d\mu(g)&=& \sum_{\orb\in\Orb}\int_F\sum_{\gamma \in \Sp(2n,\IZ)/\stab_\Gamma(\orb)}f(g\gamma u_0)f(g\gamma v_0)\,d\mu(g)\\
&=& \sum_{\orb\in\Orb}\int_{U}f(g u_0)f(g v_0)\,d\mu(g),
\end{eqnarray*}
where $$U=\bigcup_{\gamma\in \Sp(2n,\IZ)/\stab_\Gamma(\orb)}F\gamma$$
is the fundamental domain for the infinite measure homogeneous space $Sp(2n,\IR)/\stab_\Gamma(\orb)$ given by tiling the translates of $F$.

Thus the problem of estimating this second moment breaks down to understanding the set $\Orb$ of diagonal orbits of $\Gamma$ on primitive pairs of vectors and the individual contributions of each term.
The first part is dealt with in the next section, while the second part is treated in the subsequent sections where it splits further into ``intrinsically'' arithmetic contributions and geometric contributions.
The former is reflected in the indices of conjugates of $\stab_\Gamma(\orb)$ in $\stab_\Gamma(\orb_1)$ for a specific orbit $\orb_1$ and the latter in the fiber measure of an embedded copy of $\Sp(2n,\IR)/\Sp(2n-2,\IR)$ in $U$.
Happily, the two conspire to give exactly the expected behavior as we average over orbits.
\section{Diagonal orbits of $\trm{Sp}(2n)$}
\label{sec:orbits}

Here we describe the structure of orbits of $G$ and $\Gamma$ on $V=\IR^{2n}$, $V^{\otimes 2}$ and the lattices $\IZ^{2n}_\prim$ (in the standard coordinates of $\IR^m$) and its tensor square.
For a given $g\in G$ we will write 
\begin{equation}
  \label{eq:4}
  g = \mat{a}{b}{c}{d}=\mat{a_g}{b_g}{c_g}{d_g}
\end{equation}
where $a,b,c,d$ are $n\times n$ matrices satisfying the symplectic equations
\begin{eqnarray*}
  \label{eq:5}
  a^\tr d - c^\tr b &=& \trm{I}\\
  a^\tr c &=& (a^\tr c)^\tr\\
  b^\tr d &=& (b^\tr d)^\tr.
\end{eqnarray*}
\begin{lemma}
  $G$ acts transitively on $V\setminus \{0\}$ and on level sets $\sigma(v^1,v^2)=s$ of $V\times V$ with linearly independent pairs.
\end{lemma}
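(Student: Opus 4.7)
The strategy is to reduce both assertions to the transitivity of $G=\Sp(2n,\IR)$ on symplectic bases, noted at the start of the paper (since $g\in G$ is literally an ordered symplectic basis in its columns, specifying a symplectic basis determines a unique group element). So in each case the plan is to extend the given vectors to a symplectic basis and transport one basis to the other.

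For the first claim, given $v\neq 0$, nondegeneracy of $\sigma$ produces some $w\in V$ with $\sigma(v,w)=1$; set $e^1=v,\,f^1=w$ and extend by taking any symplectic basis of the $(2n-2)$-dimensional symplectic complement of $\textrm{span}(v,w)$. The resulting $g\in G$ carries the standard $e^1$ to $v$, so $G\cdot e^1=V\setminus\{0\}$.

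For the level-set claim I would split on $s$. If $s\neq 0$, then $(v^1,v^2/s)$ satisfies $\sigma(v^1,v^2/s)=1$, so it is the $(e^1,f^1)$ portion of a symplectic basis; extend it as above, do the same for $(w^1,w^2/s)$, and the unique $g\in G$ matching the two completed bases sends $v^1\mapsto w^1$ and $v^2/s\mapsto w^2/s$, hence $v^2\mapsto w^2$. If $s=0$, linear independence forces $n\geq 2$ and $\textrm{span}(v^1,v^2)$ is an isotropic $2$-plane. The map $V\to\IR^2$ sending $w\mapsto(\sigma(v^1,w),\sigma(v^2,w))$ is surjective (its kernel is the symplectic complement of this $2$-plane, of codimension $2$), so I can choose $f^1,f^2$ with $\sigma(v^i,f^j)=\delta_{ij}$. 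Then replacing $f^1$ by $f^1-\sigma(f^1,f^2)v^2$ preserves the pairings with $v^1,v^2$ while forcing $\sigma(f^1,f^2)=0$. The quadruple $(v^1,v^2,f^1,f^2)$ now plays the role of $(e^1,e^2,f^1,f^2)$ and extends to a full symplectic basis on the symplectic complement of its span; doing the same for the $w$-pair and invoking transitivity on symplectic bases completes the argument.

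The only genuine subtlety is the rank-one correction in the isotropic case, needed to kill the unwanted pairing $\sigma(f^1,f^2)$; everything else is routine symplectic linear algebra. I would also note in passing that for $n=1$ the isotropic sub-case is vacuous, since any two linearly independent vectors in $\IR^2$ have nonzero symplectic pairing, so the statement is consistent across dimensions.
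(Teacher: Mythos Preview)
Your proof is correct and follows essentially the same strategy as the paper: extend the given data to a full symplectic basis and invoke transitivity of $G$ on symplectic bases. For $s\neq 0$ your argument is literally the paper's (normalize to $(v^1,v^2/s)$ and complete on the symplectic complement). The only difference is in the $s=0$ case: the paper simply cites transitivity of $G$ on isotropic subspaces of a given dimension (implicitly using that the stabilizer of an isotropic $k$-plane surjects onto its $\GL_k$), whereas you give the explicit construction of the dual pair $f^1,f^2$ together with the rank-one correction $f^1\mapsto f^1-\sigma(f^1,f^2)v^2$. Your version is more self-contained; the paper's is terser but relies on a fact the reader must supply.
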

\begin{proof}
  The first claim and $s=0$ of the second follow from the transitivity of $G$ on isotropic subspaces of a given dimension.
For $s\neq 0$, the span $V_0$ of two independent vectors $v^1,v^2$ is non-degenerate, and $v^1,\frac{v^2}{s}$ is a symplectic basis for $(V_0,\sigma_{V_0})$.
It is easy to show that $V_0^\perp$ is also non-degenerate, thus admits a symplectic basis $w^1,\cdots, w^{2n-2}$ for $\sigma_{V_0^\perp}$.
By orthogonality with respect to $\sigma$, $v^1,\frac{v^2}{s},w^1,\cdots$ are a symplectic basis of $(V,\sigma)$.
Then the matrix with columns $v^1,\cdots,v^2/s,\cdots$ is a symplectic matrix mapping $e^1$ to $v^1$ and $f^1$ to $v^2/s$.
\end{proof}
The next step is to understand the orbits of $\Sp(2n,\IZ)$.
\begin{lemma}\label{lem:trans}
  The group $\Sp(2n,\IZ)$ acts transitively on primitive vectors in $\IZ^{2n}$.
\end{lemma}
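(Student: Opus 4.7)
The plan is to reduce the transitivity of $\Sp(2n,\IZ)$ on $\IZ^{2n}_\prim$ to the more general assertion that in any unimodular symplectic $\IZ$-lattice of rank $2n$, every primitive vector can be completed to a symplectic $\IZ$-basis. Once this stronger statement is proved, the integer matrix whose columns form such a completion of a given $v \in \IZ^{2n}_\prim$ (with $\IZ^{2n}$ carrying its standard form) lies in $\Sp(2n,\IZ)$ and sends $e^1$ to $v$, yielding the desired transitivity.

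I would prove the generalized statement by induction on $n$. For $n=1$, any unimodular alternating integer form on $\IZ^2$ is $\pm$ the standard form, and Bezout's identity completes any primitive vector to an $\SL(2,\IZ) = \Sp(2,\IZ)$ basis. For the inductive step, given a unimodular symplectic lattice $\Lambda$ of rank $2n$ and a primitive $v \in \Lambda$, the first move is to produce $w \in \Lambda$ with $\langle v, w\rangle = 1$. This follows because unimodularity identifies $\Lambda$ with its dual $\Lambda^\ast$ via $u \mapsto \langle u, \cdot\rangle$, and under this identification primitive elements correspond to functionals whose image in $\IZ$ is not contained in any $k\IZ$ with $k>1$, i.e., equals all of $\IZ$.

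The next step is the orthogonal $\IZ$-splitting
$$\Lambda \;=\; \IZ v \oplus \IZ w \oplus L,\qquad L := \{u \in \Lambda : \langle u, v\rangle = \langle u, w\rangle = 0\},$$
realized by the explicit projection $u \mapsto u - \langle u, w\rangle v + \langle u, v\rangle w$, whose coefficients are automatically integers because $\langle v, w\rangle = 1$. The sublattice $L$ is a free abelian group of rank $2n-2$, and the restricted form $\langle\cdot,\cdot\rangle|_L$ is unimodular: the Gram matrix of $\Lambda$ block-diagonalizes along the splitting into $\begin{pmatrix} 0 & 1 \\ -1 & 0 \end{pmatrix} \oplus G_L$, and unimodularity of $\Lambda$ forces $\det G_L = 1$. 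By the inductive hypothesis, $L$ admits a symplectic $\IZ$-basis, and prepending $(v, w)$ completes $v$ to a symplectic $\IZ$-basis of $\Lambda$.

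There is no substantive obstacle in this argument; the only subtle point is verifying unimodularity of the restricted form on $L$, which is immediate from the block-diagonal Gram matrix decomposition above and allows the induction to proceed cleanly. Note that the inductive hypothesis must be stated for abstract unimodular symplectic lattices rather than the standard lattice, since $L$ is not a priori identified with $\IZ^{2n-2}$ in its standard form.
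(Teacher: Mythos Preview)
Your argument is correct: this is the integral Darboux/symplectic Gram--Schmidt approach, and each step (existence of $w$ with $\langle v,w\rangle=1$ via unimodularity, the explicit orthogonal splitting, and unimodularity of the complement via the block Gram determinant) goes through without issue.

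The paper itself does not prove the lemma but defers to \cite[Section 5.1]{MoSa}; the remark immediately following indicates that the cited argument proceeds by explicit matrix manipulations---block-diagonal elements, unipotents $u_s,l_s$, and a Euclidean-algorithm reduction---in the same spirit as the paper's own Proposition~\ref{prop:orbits} on orbits of pairs. Your route is genuinely different: rather than writing down concrete symplectic matrices that push an arbitrary primitive vector to $e^1$, you argue structurally by induction on the rank of an abstract unimodular symplectic $\IZ$-lattice. What your approach buys is conceptual cleanliness and a statement that is manifestly basis-free; what the explicit approach buys is a concrete recipe for the transformation, which is exactly what the paper needs (and reuses) when classifying orbits of pairs in Proposition~\ref{prop:orbits}. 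Either is perfectly adequate here.
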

\begin{proof}
  See \cite[Section 5.1]{MoSa}.
\end{proof}
\begin{rem}
When we found the explicit reference \cite{MoSa}, we realized that their method for proving the lemma had some things in common with the method we use to describe orbits of pairs.
Since our proof of the latter is a little involved and the ideas are similar, we recommend the reader to start with the proof of transitivity given there and come back to the orbit classification here.
\end{rem}
\begin{defin}
We call a pair of vectors $v^1,v^2\in \IZ^{2n}$ primitive if both vectors are primitive and linearly independent.
\end{defin}

\begin{prop}
\label{prop:orbits}
  The orbits of the diagonal action of $\Sp(2n,\IZ)$ on primitive pairs are in bijection with triples $(s,d,a)$ where $s=\langle v^1,v^2\rangle$, $d>1\neq 0$ is a divisor of $s$ and $a\in (\IZ/d\IZ)^*$, the triple $(s,1,0)$ and triples $(s,0,a)$ where $a\in (\IZ/s\IZ)^*$.
\end{prop}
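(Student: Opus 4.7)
The plan is to identify two coarse orbit invariants and then pin down a residual discrete parameter via generators of $\stab_\Gamma(e^1)$. The pairing $s=\langle v^1,v^2\rangle\in\IZ$ is manifestly $\Sp(2n,\IZ)$-invariant. I would introduce the secondary invariant
$$d=[L_{\mathrm{sat}}:L],\qquad L=\IZ v^1+\IZ v^2,\quad L_{\mathrm{sat}}=(L\otimes\IQ)\cap\IZ^{2n},$$
which is well-defined because $v^1,v^2$ are independent. Since $v^1$ is primitive, $\IZ v^1$ is saturated, $L_{\mathrm{sat}}/\IZ v^1$ is free of rank one, and a primitive generator $u$ satisfies $v^2\equiv d\,u\pmod{\IZ v^1}$; pairing with $v^1$ yields $s=d\langle v^1,u\rangle$, whence $d\mid s$.

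By Lemma \ref{lem:trans} I would translate $v^1$ to $e^1$, giving $v^2=\alpha\,e^1+s\,f^1+w$ with $w$ in the symplectic complement $V'=\mathrm{span}(e^2,f^2,\ldots,e^n,f^n)$. The copy of $\Sp(2n-2,\IZ)\subset\stab_\Gamma(e^1)$ that acts on $V'$ and fixes $e^1,f^1$ is transitive on primitive vectors of $V'$ (Lemma \ref{lem:trans} applied to the smaller symplectic space), so writing $w=e_0\,w'$ with $w'$ primitive reduces the pair to
$$v^2=\alpha\,e^1+s\,f^1+e_0\,e^2,\qquad \gcd(\alpha,s,e_0)=1.$$

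Next, I would run the remaining generators of $\stab_\Gamma(e^1)$ on the integer pair $(\alpha,e_0)$. The central translation $f^1\mapsto f^1+c\,e^1$ moves $\alpha\mapsto\alpha+sc$; the unipotent $e^2\mapsto e^2+a\,e^1$, $f^1\mapsto f^1-a\,f^2$, followed by an $\Sp(2n-2,\IZ)$-realignment of the transverse $V'$-component $e_0 e^2-as\,f^2$, replaces $(\alpha,e_0)$ by $(\alpha+a e_0,\gcd(e_0,as))$; and the dual unipotent $f^2\mapsto f^2+b\,e^1$, $f^1\mapsto f^1+b\,e^2$ shifts $e_0\mapsto e_0+sb$. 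A combination of these shows $\gcd(s,e_0)=d$ is preserved, that $e_0$ can always be driven to $d$, and that $\alpha$ is then determined only modulo $d$. Primitivity forces $\gcd(\alpha,d)=1$, yielding the parameter $a\in(\IZ/d\IZ)^*$. The boundary regimes give the special orbits: the case $d=1$ collapses to the single triple $(s,1,0)$, while in the case $d=s$ the dual unipotent with $b=-1$ kills the $e^2$-term altogether, producing the normal form $v^2=\alpha e^1+s f^1$ recorded in the proposition as the family $(s,0,a)$ with $a\in(\IZ/s\IZ)^*$. Injectivity of the triple-to-orbit assignment follows from $\Sp(2n,\IZ)$-invariance of the pair $(s,d)$ together with the intrinsic description of $a$ as the class of $v^2$ in $L_{\mathrm{sat}}/\IZ v^1\cong\IZ/d\IZ$.

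The main obstacle will be the bookkeeping of the interleaved Heisenberg-type and $\Sp(2n-2,\IZ)$ actions: each unipotent step introduces a transverse $V'$-component that must be re-absorbed by $\Sp(2n-2,\IZ)$ before the next step, and one must verify at every stage that $d$ is preserved while $e_0$ and $\alpha$ are driven to their canonical representatives with no hidden collapse of orbits. A secondary care point is ensuring that the three regimes $d=1$, $1<d<s$, and $d=s$ (relabelled as $d=0$ in the statement) are mutually exclusive and jointly exhaustive, so that the list of triples is in genuine bijection with the set of orbits.
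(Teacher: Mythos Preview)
Your overall strategy matches the paper's: identify the invariants $s$ and $d$, send $v^1$ to $e^1$ via Lemma~\ref{lem:trans}, and then use $\stab_\Gamma(e^1)$ to normalize $v^2$. Your description of $d$ as the saturation index $[L_{\mathrm{sat}}:L]$ is equivalent to (and cleaner than) the paper's $\gcd$ of $2\times2$ minors. Where you diverge is in the choice of generators for $\stab_\Gamma(e^1)$: the paper works with block-diagonal copies of $\SL(n-1,\IZ)$ together with the Lagrangian unipotents $u_S=\bigl(\begin{smallmatrix}I&S\\0&I\end{smallmatrix}\bigr)$, $l_S=\bigl(\begin{smallmatrix}I&0\\S&I\end{smallmatrix}\bigr)$, running an explicit coordinate-by-coordinate Euclidean algorithm to reach $(e^1,\,a e^1+sf^1+d'f^n)$ and then writing down concrete matrices (for $n=2,3$, iterated in general) to replace $d'$ by $\gcd(d',s)$. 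You instead exploit the symplectic splitting $(\IR e^1\oplus\IR f^1)\oplus V'$, the embedded $\Sp(2n-2,\IZ)$ acting on $V'$, and the Heisenberg-type unipotents; this is more structural and your single move $(\alpha,e_0)\mapsto(\alpha+e_0,\gcd(e_0,s))$ collapses several of the paper's iterations into one step. Both routes land on the same normal form and the same trichotomy of regimes.

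There is one genuine slip in your injectivity argument. The assertion ``$L_{\mathrm{sat}}/\IZ v^1\cong\IZ/d\IZ$'' is false: since $v^1$ is primitive, $\IZ v^1$ is saturated in $L_{\mathrm{sat}}$, so that quotient is free of rank one, and the image of $v^2$ there is $d$ times a generator, which records $d$ but not $a$. The correct intrinsic description of $a$ is as the unique class in $\IZ/d\IZ$ with $v^2\equiv a\,v^1\pmod{d\,\IZ^{2n}}$; equivalently, the $v^1$-coefficient in any expression $v^2=a\,v^1+d\,u$ with $u\in L_{\mathrm{sat}}$, which is well-defined modulo $d$ since $u$ is only determined up to $\IZ v^1$. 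This is visibly $\Sp(2n,\IZ)$-equivariant and gives the injectivity you want. The paper reaches the same conclusion from the other side, observing directly on the normal form that any $\gamma$ fixing $e^1$ and preserving the shape forces $a'-a\in s\IZ+d\IZ=d\IZ$. With this repair your argument is complete.
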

\begin{proof}
First of all, it is clear that $s$ is preserved by symplectomorphisms.
Let $d$ be the greatest common divisor of all $2\times 2$ minors of the $(2n)\times 2$ matrix $(v^1,v^2)$;
to see this is preserved, note that the diagonal action of $\Gamma$ gives a linear action on $V^{\otimes 2}$  where it preserves decomposable tensors and the subspace of symmetric tensors, thus descending to $\bigwedge ^2 V$.
Since the coefficients of a decomposable $2$-form are the $2\times 2$ minors of the corresponding matrix and $\Gamma$ is a linear action over $\IZ$ on $\bigwedge^2 V$, the equation $g\cdot v^1\wedge v^2 = g v^1\wedge g v^2$ shows that the $2\times 2$ minors of the image are integer linear combinations of those of $v^1\wedge v^2$, and thus the $\gcd$ is non-decreasing.
Since the action is invertible, the $\gcd$ is non-increasing either, so it is preserved.

For the remaining datum characterizing the orbit, and to show transitivity within this data, using coordinates is more convenient.  As before, we write $v^i = (x^i,y^i)$.
Using Lemma \ref{lem:trans}, we can assume $v^1=e^1$ and write $v^2=v$ so that $s=v_{n+1} = y_1$. 
First assume $s>0$ and the case $s<0$ will follow.
Consider a block-diagonal matrix $g = \mat{g_1^*}{0}{0}{g_1}$ in $\Sp(2n,\IZ)$ where $g^*$ is the inverse transpose and the first row (resp. column) of $g_1$ is $e^1$ (resp $(e^1)^\tr$).
The remaining $(n-1)\times (n-1)$ submatrix is an arbitrary element of $\SL(n-1,\IZ)$; note $g$ fixes $e^1$.
Let $d_0= \gcd(y_2,\cdots,y_n)$.
Then $d_0^{-1}(y_2,\cdots,y_n)$ is either primitive or, if $d_0=0$, the zero vector.  
In any case, by the transitivity of $\SL(n-1,\IZ)$ on primitive vectors we can take $(y_2,\cdots, y_n)$ to $(0,\cdots, d_0)$.

Next, we need elements of $\Sp(2n,\IZ)$ of the form 
$$ u_s=\mat{\trm{I}}{s}{0}{\trm{I}},\quad l_s =   \mat{\trm{I}}{0}{s}{\trm{I}}$$
where $s$ is an $(n\times n)$ symmetric matrix.
Suppose $d_0> x_n$ and $d_0\neq 0$. 
Using $u_s$ with $s = qE_{nn}$ ($E_{nn}$ is the elementary matrix with $1$ at the $(n,n)$-th entry and zero elsewhere), we can replace $x_n$ with the remainder after division by $d_0$, leading to $x_n\mapsto r$ with $r<d_0$.
If $d_0<x_n$, which will be the case after the previous step, we can do the same with the appropriate $l_s$.
Thus by alternating between suitable $l_s$ and $u_s$ we implement the Euclidean algorithm to turn the pair $\{x_n,d_0\}$ into $\{d_1,0\}$ where $d_1 = \gcd(d_0,x_n)$.
Once we are done, if the new $y_n = 0$ then we switch the two with one more pair of $u_s$,$l_s$.

Now we want to iterate this procedure and use $d_1$ to turn $x_{n-1},x_{n-2},\cdots,x_2$ into $0$ and $d_1$ into $d_2,\cdots d_{n-3} =d'$ where $d'$ will be the greatest common divisor of all the $2\times 2$ minors except the one involving $s$.
For this, we will need to use $l_s$ with $s$ having off-diagonal entries, so care must be taken not to ruin the previous arrangement.
However, to carry out the division process between $x_{n-1}$ and $d_1$, the matrix $s$ in the $u_s$ we need  has non-zero entries $(n-1,n)$ and $(n,n-1)$; the former performs the division, and the latter does not affect the zero at the $x_n$ entry, since $y_{n-1}=0$ after the reduction step.
The same argument shows the $l_s$ will not ruin the zero entries in the pair.
This process can be iterated to eliminate all non-zero entries $x_2,\cdots, x_n$ and bring the pair to the form 
\begin{equation}
  \label{eq:6}
  (e^1, ae^1 + sf^1 + d'f^n) = \left(\begin{array}{cc}1 & a \\ 0 & 0\\ \vdots & \vdots \\ 0 & s\\ \vdots & \vdots\\ 0 & d'\end{array}\right). 
\end{equation}

If $d_0=0$, so all the $y_i,\, i>1,$ are zero, then we use a second block-diagonal matrix $g'=\diag(g_2,g_2^*)$ fixing $e^1$ to transform the vector $(x_2,\cdots,x_n)$ into $(0,\cdots, d')$, where now we suppose $d'\neq 0$.
Then as before we switch the zero at the $y_n$-th entry with $d'$ and we are back at the previous form.
If $d'$ is also zero, we are in the orbit of $(a,0\cdots,s,0\cdots)^\tr$

The next step is to replace $d'$ by $d=\gcd(d',s)$; we will write down the matrices in the cases $n=2,\,3$; the general case follows by iterated applications of these steps to successively eliminate the non-zero coordinates of the second vector.

Let $k,l$ be integers such that $sk+ld' = d$ and let $d=rd'$.
For $n=2$, we have
\begin{equation}
  \label{eq:7}
  \left(\begin{array}{c} a+kd' \\ 0 \\ s \\ d\end{array}\right)
=
  \left(\begin{array}{cccc}1 & 0 & 0 & k\\ 0 & r & rk & rl-1\\ 0 & 0 & 1 &0\\ 0 & 1 & k & l \end{array}\right)
  \left(\begin{array}{c} a \\ 0 \\ s \\ d'\end{array}\right).
\end{equation}
For $n=3$ pick
\begin{equation*}
  M_1 = \left(\begin{array}{cccccc}1 & 0 &0&0&k&0\\0&0&0&0&0&-1\\0&0&0&0&-1&0\\0&0&0&1&0&0\\0&0&1&0&l&0\\
0&1&0&k&0&l \end{array}\right)
\end{equation*}
which transforms
\begin{equation}
  \label{eq:9} 
  \left(\begin{array}{c} a \\ 0 \\ 0 \\ s\\0\\d'\end{array}\right) \mapsto \left(\begin{array}{c} a \\ -d' \\ 0 \\ s\\0\\d\end{array}\right)
\end{equation}
and then pick $M_2 = u_s$ where $s = rE_{32}+rE_{23}$ to eliminate the new non-zero entry.
Then $M=M_2M_1$ provides the desired transformation (this is how we built the matrix for $n=2$ as well).

Note that $d$ is indeed the second elementary divisor of the matrix in \eqref{eq:6} and we already saw that it is preserved by symplectic maps (this can now also be seen by the representative we got).
It remains to show that two pairs $(s,d,a),\,(s,d,a')$ are in the same orbit if and only if $a\equiv a'\pmod{d}$
 (by primitivity and the form to which we brought each pair, $\gcd(d,a)=1$).
But now it is clear that if they are in the same orbit, then $a' = a + xs + yd$ for some $x,y\in \IZ$, so the claim follows.
\end{proof}

\section{Algebraic structure of the stabilizers}
\label{sec:struct-stab}
In order to analyze the integral in \eqref{eq:2}, we want to split it into orbits and reassemble the pieces into orbital integrals for the action of $\Sp(2n,\IR)$.
Fix an orbit of $\Sp(2n,\IZ)$ represented by $(s,d,a)$; we need to understand the groups $S_G=\stab_G((s,d,a)_0)$ and $S_\Gamma=\stab_\Gamma((s,d,a)_0)$ where $(s,d,a)_0$ denotes the explicit representative we found in Proposition \ref{prop:orbits} and we will denote it also by $(s,d,a)$ from now on.  To simplify the appearance of formulas below, let $m=n-1$.

Let $(s)$ denote the pair $\langle e^1,sf^1\rangle$.
It is clear that the stabilizer of $(s)$ in $\Sp(2n,\IR)$ is the embedded $\Sp(2m,\IR)=: S(s)$ fixing the symplectic plane $\IR e^1+\IR f^1$, and this describes the stabilizer in $G$ of any point of an orbit $(s,d,a)$ as a suitable $G$-congugate of $S(s)$ (in fact, as a conjugate over $\Sp(2n,\IZ[\frac{1}{s}])$).

Explicitly, the transformation in  $\Sp(2n,\IZ[\frac{1}{s}])$ given by
\begin{equation}
  \label{eq:10}
  T_{(s,d,a)} = \left(e^1,e^2,\cdots,\frac{-d}{s}e^1+e^n, \frac{a}{s}e^1+f^1 + f^n\frac{d}{s}, f^2,\cdots,f^n\right)
\end{equation}

fixes $e^1$ and takes $(a,0,\cdots, s,\cdots, d)^\tr$ to $sf^1$.
Thus we have $$ \stab_G(s,d,a) = T_{(s,d,a)}^{-1} S(s) T_{(s,d,a)}.$$

The crucial thing about this stabilizer is that its $\IZ$-points come from the subgroup of $\Sp(2m,\IZ)$ of matrices with the constraints ($i$ below ranges from $1$ to $m-1$ for $n\geq 3$):

\begin{eqnarray}\label{congr}
  \mat{A}{B}{C}{D} &:& A_{mi}\equiv D_{im}\equiv 0 \pmod{q}\\ & &A_{mm}\equiv D_{mm}\equiv 1\pmod{q},\nonumber\\
& & B_{mi}\equiv B_{im}\equiv 0\pmod{q}\nonumber\\ & & B_{mm}\equiv 0\pmod{q^2}.\nonumber
\end{eqnarray}
where $q=\frac{s}{d}$.

For instance, when $n=2$ the stabilizer is conjugate to the preimage of
\begin{equation}
  \label{eq:13}
  \left\{\mat{1+qk}{0}{l}{1-qk}: k\in\IZ/q\IZ,\, l\in\IZ/q^2\IZ \right\}<\SL(2,\IZ/q^2\IZ)
\end{equation}

of cardinality $q^3$, so the stabilizer corresponds to a subgroup of $\SL(2,\IZ)$ of index 
\begin{equation}
  \label{eq:14}
  [\SL(2,\IZ):S_\Gamma(s,d,a)]=q^3\prod_{p|q^2}\left(1-\frac{1}{p^2}\right).
\end{equation}

In general, we have
\begin{lemma}
  The index of $S_\Gamma(s,d,a)$ in the copy of $\Sp(2m,\IZ)$ in $S(s)$ is 
  \begin{equation}
    \label{eq:11}
    q^{2m+1}\prod_{p|q}\left(1-\frac{1}{p^{2m}}\right)
    \end{equation}
where the outer product is over primes $p$ dividing $q$.
\end{lemma}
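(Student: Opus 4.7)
The plan is to reduce the index computation to a counting problem in the finite group $\Sp(2m,\IZ/q^2\IZ)$ via strong approximation, and then to recognize the conditions in (\ref{congr}) as essentially cutting out the stabilizer of the single vector $f_m$ together with one extra refinement modulo $q^2$.  Since strong approximation holds for $\Sp(2m)$, the reduction $\Sp(2m,\IZ)\to\Sp(2m,\IZ/q^2\IZ)$ is surjective, so the index I want equals $[\Sp(2m,\IZ/q^2\IZ):H]$, where $H$ is the image of $S_\Gamma(s,d,a)$, namely the subgroup of the finite group defined by the same congruences.  Both sides factor over prime-power divisors of $q$ by the Chinese remainder theorem, which is already consistent with the multiplicative shape of the target formula; I will nonetheless work with $q$ directly to keep notation uniform.

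The central step is to show that the mod-$q$ portion of (\ref{congr}) --- the entry conditions on the last row of $A$, the last column of $D$, and both the last row and last column of $B$ --- collapses to the single statement $g f_m\equiv f_m\pmod{q}$.  Indeed, $g f_m=f_m$ reads the last column of $g$ as $(0,\ldots,0,1)^\tr$, which gives $B_{*m}=0$ and $D_{*m}=e_m$; applying the same logic to $g^{-1}=\mat{D^\tr}{-B^\tr}{-C^\tr}{A^\tr}$ (equivalently, reading off the $m$-th-row entries of $g^\tr J g=J$ at the last column) forces $A_{m*}=e_m^\tr$ and $B_{m*}=0$.  Therefore the image $\bar H$ of $H$ in $\Sp(2m,\IZ/q\IZ)$ is precisely the stabilizer of $f_m$ for the natural action on $(\IZ/q\IZ)^{2m}$.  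By Lemma~\ref{lem:trans} together with strong approximation, $\Sp(2m,\IZ/q\IZ)$ acts transitively on primitive vectors of $(\IZ/q\IZ)^{2m}$, whose number is $q^{2m}\prod_{p\mid q}(1-p^{-2m})$, so orbit-stabilizer yields
\begin{equation*}
[\Sp(2m,\IZ/q\IZ):\bar H]=q^{2m}\prod_{p\mid q}\left(1-\tfrac{1}{p^{2m}}\right).
\end{equation*}

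To incorporate the remaining refinement $B_{mm}\equiv 0\pmod{q^2}$, I would examine the reduction $\pi:\Sp(2m,\IZ/q^2\IZ)\to\Sp(2m,\IZ/q\IZ)$ restricted to $\pi^{-1}(\bar H)$.  For any $g\in\pi^{-1}(\bar H)$ the mod-$q$ analysis already gives $B_{mm}(g)\equiv 0\pmod{q}$, so $\phi(g):=q^{-1}B_{mm}(g)\bmod q$ is a well-defined map $\pi^{-1}(\bar H)\to\IZ/q\IZ$ whose kernel is exactly $H$.  Perturbing $g$ by a right multiplication $g(I+qN)$ with $N\in\mathfrak{sp}(2m,\IZ/q\IZ)$ and using the mod-$q$ conditions in place gives $(g(I+qN))_{B,mm}\equiv B_{mm}(g)+q(N_B)_{mm}\pmod{q^2}$; since $(N_B)_{mm}$ is a free diagonal entry of the symmetric block $N_B$, the map $\phi$ is surjective and contributes a final factor of $q$.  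Combining the two indices delivers the claimed value $q^{2m+1}\prod_{p\mid q}(1-p^{-2m})$.  The main obstacle is the geometric collapse in the middle paragraph --- seeing that the apparently independent congruences on three different blocks really amount to the single statement that $g$ fixes $f_m$ modulo $q$; once that identification is in hand, the remainder is routine orbit-stabilizer bookkeeping together with a one-parameter kernel count.
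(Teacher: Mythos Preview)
Your argument is correct and takes a genuinely different route from the paper's. The paper computes the index by brute-force cardinality: it invokes the formula for $|\Sp(2m,\IZ/q^2\IZ)|$, factors $|S_{q^2}|=|S_q|\cdot|\ker(S_{q^2}\to S_q)|$, recognizes $S_q$ as $\Sp(2m-2,\IZ/q\IZ)\ltimes H_{2m-1}(\IZ/q\IZ)$, and counts the kernel of the level-lowering map by linearizing $M=I+qX$. Your proof, by contrast, identifies the mod-$q$ congruences as exactly the condition $g f_m\equiv f_m$, so that orbit--stabilizer produces $[\Sp(2m,\IZ/q\IZ):\bar H]$ directly as the number of primitive vectors $q^{2m}\prod_{p\mid q}(1-p^{-2m})$, and then isolates the single residual condition $B_{mm}\equiv 0\pmod{q^2}$ as one extra factor of $q$. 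Your approach is shorter and more conceptual: it explains \emph{why} the answer has the shape it does (orbit size of a primitive vector, times one character) and never needs the explicit cardinality of $\Sp(2m,\IZ/q\IZ)$ or the Heisenberg semidirect product description. The paper's approach, in exchange, exposes the internal structure $S_q\simeq \Sp(2m-2)\ltimes H_{2m-1}$, which is interesting in its own right. One small remark: you use the word ``kernel'' for $\phi$ without verifying that $\phi$ is a homomorphism; it is (a two-line check using the mod-$q$ conditions on the blocks of the factors shows $B_{mm}(g_1g_2)\equiv B_{mm}(g_1)+B_{mm}(g_2)\pmod{q^2}$), but even without that your perturbation argument already shows every $\ker\pi$-coset meets each fiber of $\phi$ in a set of the same size, which is all you need.
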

\begin{proof}
  First of all we have the general formula for $q\geq 2$
  \begin{equation}
    \label{eq:12}
    \left|\Sp(2n,\IZ/q\IZ)\right| = q^{2n^2+n}\prod_{p|q}\prod_{i=1}^n\left(1-\frac{1}{p^{2i}}\right).
  \end{equation}
Let's abbreviate $S_\Gamma(s,d,a) = S$.
The index in $\Sp(2n-2,\IZ)$ of $S$ is the same as the index of $S\pmod{q^2}=:S_{q^2}$ in $\Sp(2n-2,\IZ/q^2\IZ)=:\Gamma_{q^2}$ by the surjectivity of projections of arithmetic groups onto their $(\IZ/m\IZ)$-points and the fact that $S$ contains the principal congruence subgroup modulo $q^2$.
Next we want to simplify the counting by noting that $$|S_{q^2}| =  |S_q||\ker(S_{q^2}\to S_q)|$$
so 
\begin{equation}
  \label{eq:15}
  |\Gamma_{q^2}:S_{q^2}| = \frac{|\Gamma_{q^2}|}{|S_q||\ker(S_{q^2}\to S_q)|}.
\end{equation}
First we treat the cardinality $| S_q|$.
The image $S_q$ consists of matrices of the form
\begin{equation}
  \label{eq:16}
    \left(\begin{array}{cccc}a & x &b&0\\0&1&0&0\\c&y&d&0\\t&z&s&1\\
 \end{array}\right)
\end{equation}
where $(a,b,c,d)\in \Sp(2m-2,\IZ/q\IZ)$ arbitrary, $(x,y,z)\in (\IZ/q\IZ)^{2(m-1)+1}$ arbitrary and $(s,t)\in (\IZ/q\IZ)^{2m-2}$ completely determined by the previous data.
In fact, we can recognize this subgroup as 
\begin{equation}
  \label{eq:17}
  S_q \simeq \Sp(2m-2,\IZ/q\IZ)\ltimes H_{2m-1}(\IZ/q\IZ)
\end{equation}
where $H_k$ is the $k$-dimensional Heisenberg group given by any symplectic form defined over $\IZ$.

Using induction and the obvious cardinality of the Heisenberg group, we have 
\begin{equation}
  \label{eq:18}
  |S_q| = q^{2m-1}q^{2(m-1)^2+(m-1)}\prod_{p|q}\prod_{i=1}^{m-1}\left(1-\frac{1}{p^{2i}}\right)
\end{equation}
which we simplify to 
\begin{equation*}
   |S_q| = q^{2m^2-m}\prod_{p|q}\prod_{i=1}^{m-1}\left(1-\frac{1}{p^{2i}}\right).
\end{equation*}
Now the kernel of the projection morphism consists of symplectic matrices as in \eqref{congr} supplanted by the rest of the congruences giving $A \equiv 1 \pmod{q}$, $B\equiv 0\pmod q$, $C\equiv 0\pmod{q}$ and $D\equiv 1\pmod{q}$ with the extra condition $B_{mm}=0$ in $\IZ/q^2\IZ$.

Writing each such matrix $M$ as $M = I + qX$ where $$X = \mat{X_1}{X_2}{X_3}{X_4}$$ and each of the block matrices with entries in $\IZ/q\IZ$, the symplectic restrictions in $\IZ/q^2\IZ$ become the equations $X_3=X_3^\tr$, $X_2=X_2^\tr$, $X_1 = -X_4^\tr$.
From the congruences in \eqref{congr} the only one not appearing above is $B_{mm}\equiv 0\pmod{q^2}$ which becomes the extra restriction $(X_2)_{mm}=0$.

With this description of the kernel, we can now easily count: $q^{m^2}q^{\frac{m(m+1)}{2}}q^{\frac{m(m+1)}{2}-1} = q^{2m^2+m-1}$ is the cardinality of the kernel.  Now that we have the cardinality of the kernel and the image, we can use \eqref{eq:15} and the fact that $p|q\iff p|q^2$ for a prime $p$ to get the result.
\end{proof}

\section{Orbital spaces and their measures}
\label{sec:orbital-spaces-their}
Using the transformation $T_{(s,d,a)}$ from the previous section, we can translate the space (we emphasize again that these spaces are identified with $F$-tilings inside $\Sp(2n,\IR)$) $$\Sp(2n,\IR)/\stab_\Gamma(s,d,a)\stackrel{T_{(s,d,a)}}{\longrightarrow} \Sp(2n,\IR)/\Gamma_1$$ where $\Gamma_1$ is the congruence subgroup we obtained there.  This transformation takes the representative $(s,d,a)$ to $(s,0,0) = (e^1,sf^1)$ which is fixed by all $\gamma\in \Sp(2m,\IZ)$, for the embedded $\Sp(2m,\IZ) = S_\Gamma((s,0,0))= S_\Gamma((1,0,0))$ in the stabilizer of the plane $\IR e^1+\IR f^1$.
This way we obtain the identity (since $T$ is measure-preserving) 
\begin{eqnarray*}
  \int_{\Sp(2n,\IR)/S_\Gamma(s,d,a)} f(gu_0)f(g v_0)\,d\mu(g) &=& \int_{\Sp(2n,\IR)/\Gamma_1}f(ge^1)f(sgf^1)\,d\mu(g)\\
 &=& [\Sp(2m,\IZ):\Gamma_1]\int_{\Sp(2n,\IR)/\Sp(2m,\IZ)}f(ge^1)f(sgf^1)\,d\mu(g).
\end{eqnarray*}

Now we want to decompose 
\begin{eqnarray*}
& &\int_{\Sp(2n,\IR)/\Sp(2m,\IZ)}f(ge^1)f(sgf^1)\,d\mu(g)\\& =& \int_{\Sp(2n,\IR)/\Sp(2m,\IR)}\int_{\Sp(2m,\IR)/\Sp(2m,\IZ)}f(ghe^1)f(sghf^1)\,d\mu_2(h)\,d\mu_1(g)
\end{eqnarray*}
where $\mu_i$ come from Haar measures on the corresponding group normalized so their product gives $d\mu$.
The most convenient way of doing this is to use the Lie algebra perspective taken in \cite{MoSa}, which is equivalent to the approach of \cite{Gar}.
In both cases, for the embedded $\Sp(2m,\IR)/\Sp(2m,\IZ)$ (which happens to be the exact same space we are considering here) we get the recurrence relation $$\vol(\Sp(2n,\IR)/\Sp(2n,\IZ))=\zeta(2n)\vol(\Sp(2m,\IR)/\Sp(2m,\IZ))$$
so with our normalization $\vol(F)=1$, the latter has total volume $\frac{1}{\zeta(2n)}$.
Since the pair $(e^1,f^1)$ is fixed by this copy of $\Sp(2m,\IR)$, we finally get the expression for the orbital integral
\begin{equation}
  \label{eq:22}
 \int_{\Sp(2n,\IR)/S_\Gamma(s,d,a)} f(gu_0)f(g v_0)\,d\mu(g) = \frac{[\Sp(2m,\IZ):\Gamma_1]}{\zeta(2n)}\int_{\Sp(2n,\IR)/\Sp(2m,\IR)}f(ge^1)f(sgf^1)\,d\mu_1(g).
\end{equation}
The choice of the normalizations made in \cite{MoSa} for the $\mu$ and $\mu_2$ of course forces $\mu_1$ to have a specific normalization, namely the one coming from taking the orthonormal basis vectors of the Lie algebra of $\Sp(2n,\IR)$ orthogonal to those of $\Sp(2m,\IR)$ and tensoring to get the volume form on the quotient.
To understand this space, let $h(x,y) = f(x)f(sy)$, and write the integral as
\begin{equation*}
\int_{\Sp(2n,\IR)/\Sp(2m,\IR)}h(ge^1,gf^1)\,d\mu_1(g).
\end{equation*}
Since $\Sp(2n,\IR)$ is transitive on pairs of the symplectic value $1$ and $\Sp(2m,\IR)$ is the stabilizer of one such point, the homogeneous space can be identified with the hypersurface of pairs $(x,y)\in (\IR^{2n})^2$ with $\langle x,y\rangle=1$ (in fact, it is clear that the pair is the first and $(n+1)$-th columns of $g$).
Now we will give convenient coordinates and write down the measure explicitly.

First, fix $x\in \IR^{2n}\setminus 0$.
Select $y^*(x)$ so that $\langle x,y^*(x)\rangle = 1$ and extend $\Sp(2n,\IR)$-equivariantly 
by defining $y^*(gx) = gy^*(x)$ for all $g\in \Sp(2n,\IR)$.
Since $\Sp(2n,\IR)$ is transitive on non-zero vectors, this defines $y^*$ for all $x$.

Note that if two vectors $y,y'$ satisfy $\langle x,y\rangle =\langle x,y'\rangle = s$, their difference lies on the hyperplane $\langle x,z\rangle= 0$.  On each hyperplane we give coordinates as follows: given $x$ as before, pick a basis $y_1(x),\cdots,y_{2n-1}(x)$ for the hyperplane so that $y_i(gx) = gy_i(x)$.
  
With this definition, it is trivial to check that we get an equivariant assignment of bases for each non-zero $x\in \IR^{2n}$.
Then every element in the hypersurface $\langle x,y\rangle=s$ can be written as 
\begin{equation}
  \label{eq:21}
 (x,y^*(x) + t_1y_1(x)+\cdots+t_{2n-1}y_{2n-1}(x)) = (x,sy^*(x) + t\cdot \tilde{y}(x))
\end{equation}
and this coordinate system is compatible with the diagonal action of $\Sp(2n,\IR)$.
\begin{lemma}
  The measure $dx_1\cdots dx_{2n}\,dt_1\cdots dt_{2n-1}$ coincides with the Haar measure on $\Sp(2n,\IR)/\Sp(2m,\IR)$ with the normalization we chose.
\end{lemma}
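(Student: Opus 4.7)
The plan is to combine $\Sp(2n,\IR)$-invariance with uniqueness of invariant measures on a transitive homogeneous space. Both $dx\,dt$ and the quotient Haar measure $\mu_1$ live on the hypersurface $\{(x,y):\langle x,y\rangle=s\}$, and by the first lemma of Section \ref{sec:orbits} the group $\Sp(2n,\IR)$ acts transitively on this hypersurface, so any two invariant measures on it are proportional; the task reduces to verifying invariance of the coordinate measure and then matching normalizations at a single point.

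For invariance, the equivariance of $y^*$ and the $y_i$ makes the action of $g\in\Sp(2n,\IR)$ on the coordinates $(x,t)$ of \eqref{eq:21} take the simple form $(x,t)\mapsto(gx,t)$: the $t$-coefficients are left unchanged because the decomposition $sy^*(x)+\sum t_i y_i(x)$ transports linearly under $g$. Since symplectic transformations have determinant $1$ and preserve Lebesgue measure on $\IR^{2n}$, the product $dx\,dt$ is $\Sp(2n,\IR)$-invariant, and $\mu_1$ is invariant by its construction as a Haar quotient.

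To pin down the remaining proportionality constant I compare tangent frames at the base point $(e^1,f^1)$, taking $s=1$. The differential $X\mapsto(Xe^1,Xf^1)$ identifies the tangent space of the hypersurface at $(e^1,f^1)$ with $\mathfrak{sp}(2n,\IR)/\mathfrak{sp}(2m,\IR)$, which via the inner product underlying the normalizations of $\mu$ and $\mu_2$ in \cite{MoSa,Gar} is isomorphic to the orthogonal complement $\mathfrak{sp}(2m,\IR)^\perp$; the Haar volume element is the wedge of the duals of an orthonormal basis of this complement, while $dx\,dt$ evaluates to the wedge of the coordinate frame $\{\partial_{x_i}\}\cup\{\partial_{t_j}\}$. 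The ratio of the two is the determinant of the change-of-basis matrix between these frames, and the main obstacle is performing this explicit linear-algebraic calculation: it requires tracking how the generators of $\mathfrak{sp}(2m,\IR)^\perp$ act on $e^1$ and $f^1$ and comparing with the chosen equivariant fields. Fortunately, the only latitude in defining $y^*$ and the $y_i$ is precisely in choosing the base-point values $y^*(e^1)$ and $y_j(e^1)$, so one picks these vectors so that the base-change determinant equals $1$; $\Sp(2n,\IR)$-equivariance then propagates the equality everywhere, giving $dx\,dt=d\mu_1$.
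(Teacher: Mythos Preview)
Your proof is correct and takes the same approach as the paper's one-sentence argument: the paper invokes $G$-equivariance of $y^*$ and the $y_i$ for invariance and the orthogonal complement of $\mathfrak{sp}(2m,\IR)$ for the normalization, and you have simply unpacked both steps in detail. The only addition is your explicit acknowledgment that the base-point choices of $y^*(e^1)$ and $y_j(e^1)$ must be made compatibly with the orthonormal frame on $\mathfrak{sp}(2m,\IR)^\perp$, a point the paper leaves implicit.
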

\begin{proof}
  This follows directly from the $G$-equivariance of the assignments $y^*$ and $y_i$ and the fact that this is precisely the measure coming from tensoring the Lie algebra directions orthogonal to $\Sp(2m,\IR)$.
\end{proof}
\begin{prop}
\label{prop:4}
We have 
\begin{eqnarray}
  \label{eq:8}
   & &\int_{\Sp(2n,\IR)/S_\Gamma(s,d,a)} f(gu_0)f(g v_0)\,d\mu(g) \nonumber\\ & =&  \frac{[\Sp(2m,\IZ):\Gamma_1]}{s^{2n-1}\zeta(2n)}\int_{\IR^{2n}}\int_{\IR^{2n-1}}f(x)f\left(sy^*+\sum_i t_iy_i(x)\right)\,dt\,dx.
\end{eqnarray}
\end{prop}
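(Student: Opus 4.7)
The plan is to pick up from the identity \eqref{eq:22}, which has already reduced the left-hand side to
\begin{equation*}
\frac{[\Sp(2m,\IZ):\Gamma_1]}{\zeta(2n)}\int_{\Sp(2n,\IR)/\Sp(2m,\IR)}f(ge^1)f(sgf^1)\,d\mu_1(g),
\end{equation*}
so my task reduces to rewriting this remaining orbital integral explicitly as a Lebesgue integral on $\IR^{2n}\times\IR^{2n-1}$.

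First I would invoke the identification of $\Sp(2n,\IR)/\Sp(2m,\IR)$ with the hypersurface $H=\{(x,y)\in\IR^{2n}\times\IR^{2n}:\langle x,y\rangle=1\}$ via $g\mapsto(ge^1,gf^1)$, using the earlier transitivity lemma ($\Sp(2n,\IR)$ acts transitively on pairs of symplectic product $1$) together with the identification of the stabilizer of $(e^1,f^1)$ with the embedded $\Sp(2m,\IR)$. The integrand pulls back to $f(x)f(sy)$ on $H$. Next, I would use the equivariant assignments $y^*(x)$ and $y_i(x)$ to parametrize $H$ by coordinates $(x,t)\in\IR^{2n}\times\IR^{2n-1}$ via $y=y^*(x)+\sum_i t_i y_i(x)$, and then invoke the preceding lemma which identifies $d\mu_1$ with Lebesgue measure $dx\,dt$ in these coordinates. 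At this point the integral becomes
\begin{equation*}
\int_{\IR^{2n}}\int_{\IR^{2n-1}}f(x)\,f\!\left(sy^*(x)+\sum_i (st_i)\,y_i(x)\right)dt\,dx,
\end{equation*}
after distributing the outer factor of $s$ through the argument of the second $f$.

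The last step would be to perform the linear change of variables $u_i=st_i$ in the $t$-integration, which produces a Jacobian factor of $|s|^{2n-1}$ in the denominator (the sign is immaterial since Lebesgue measure is unsigned; the $s^{2n-1}$ in the statement is to be read in this convention). After renaming $u$ back to $t$, the inner double integral becomes exactly the quantity $G(s)$ from the definition, divided by $s^{2n-1}$, and combining with the prefactor from \eqref{eq:22} gives the claim. I do not foresee any real obstacle: the nontrivial analytic content---namely the measure identification on the non-compact orbit space $\Sp(2n,\IR)/\Sp(2m,\IR)$---has already been absorbed into the preceding lemma, and what remains is essentially a single scaling substitution together with the bookkeeping above.
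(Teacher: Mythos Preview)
Your proposal is correct and follows essentially the same route as the paper: the paper also starts from \eqref{eq:22}, applies the preceding measure-identification lemma to write the orbital integral as $\int_{\IR^{2n}}\int_{\IR^{2n-1}}h(x,y^*(x)+\sum_i t_iy_i)\,dt\,dx$ with $h(x,y)=f(x)f(sy)$, and then performs the scaling substitution (which the paper phrases as ``$(x,y)\to(x,sy)$'' and you spell out as $u_i=st_i$) to extract the factor $s^{-(2n-1)}$. Your write-up is simply a more explicit version of the same two-line argument.
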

\begin{proof}
From the previous lemma, \begin{equation*}
\int_{\Sp(2n,\IR)/\Sp(2m,\IR)}h(ge^1,gf^1)\,d\mu_1(g) = \int_{\IR^{2n}}\int_{\IR^{2n-1}}h(x,y^*(x) + \sum t_iy_i)\,dt\,dx
\end{equation*}
and now substitute the definition and make the change of variables $(x,y)\to (x,sy)$.
\end{proof}

This concludes our analysis of the individual orbital integrals.

\section{Orbital sums in a given symplectic class and effective Ikehara-Wiener}
\label{sec:arithm-coeff-effect}
We now aggregate orbital integrals corresponding to a given $s$ and estimate their contribution.
From the results of the previous sections we have
\begin{eqnarray}\label{eqarr:1}
  & &\sum_{d|s,a\in (\IZ/d\IZ)^*}\int_{\Sp(2n,\IR)/\stab_\Gamma(s,d,a)}f(gu_0,gv_0)\,d\mu(g) \\ &=&\left(\frac{1}{\zeta(2n)s^{2n-1}}\sum_{d|s,a\in (\IZ/d\IZ)^*}     q^{2m+1}\prod_{p|q}\left(1-\frac{1}{p^{2m}}\right)\right)\int_{\IR^{2n}}\int_{\IR^{2n-1}}f(x)f\left(sy^*+\sum_i t_iy_i(x)\right)\,dt\,dx\nonumber\\
&=&\left(\frac{1}{\zeta(2n)s^{2n-1}}\sum_{d|s}\phi(d)     q^{2m+1}\prod_{p|q}\left(1-\frac{1}{p^{2m}}\right)\right)\int_{\IR^{2n}}\int_{\IR^{2n-1}}f(x)f\left(sy^*+\sum_i t_iy_i(x)\right)\,dt\,dx.\nonumber
\end{eqnarray}
The goal in this section is to estimate the contribution of the arithmetic coefficients.
To that end, note that the divisor sum is the Dirichlet convolution of $\phi(s)$ and  $X(s) = s^{2m+1}\prod_{p|s}\left(1-\frac{1}{p^{2m}}\right)$.
\begin{defin}
  Let 
  \begin{equation}
    \label{eq:19}
a(s) = \frac{1}{\zeta(2n)s^{2n-1}}\sum_{d|s}\phi(d)q^{2m+1}\prod_{p|q}\left(1-\frac{1}{p^{2m}}\right) = \frac{(\phi*X)(s)}{s^{2n-1}\zeta(2n)}
  \end{equation}
denote the $s$-th arithmetic coefficient in the integral above, and $$A(M) = \sum_{s\leq M}a(S)$$ its summatory function.
\end{defin}
\begin{lemma}
  The $L$-function corresponding to $\phi*X$ is $L(s)=\frac{\zeta(s-2n+1)}{\zeta(s)}$.
\end{lemma}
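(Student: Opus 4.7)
The plan is to identify the $L$-function of $\phi*X$ as the product of the Dirichlet series of $\phi$ and of $X$, and then compute the latter using an Euler product. Recall that $\phi$ is multiplicative and the standard identity
$$\sum_{k\geq 1}\frac{\phi(k)}{k^s} = \frac{\zeta(s-1)}{\zeta(s)}$$
comes from writing $\phi = \mu * \mathrm{id}$. Multiplicativity of $X$ is immediate from its definition, since $X(k)=k^{2m+1}\prod_{p\mid k}(1-p^{-2m})$ splits as the product of the completely multiplicative function $k\mapsto k^{2m+1}$ and the multiplicative function $k\mapsto\prod_{p\mid k}(1-p^{-2m})$. Therefore both factors of the Dirichlet convolution admit Euler products, and $L(s)$ does as well.

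The main computation is the local factor of $\sum_k X(k)/k^s$ at a prime $p$. Since $X(p^j)=p^{j(2m+1)}(1-p^{-2m})$ for $j\geq 1$ and $X(1)=1$, with $u = p^{2m+1-s}=p^{2n-1-s}$ I would sum the geometric series:
$$1 + (1-p^{-2m})\sum_{j\geq 1} u^j = 1 + (1-p^{-2m})\frac{u}{1-u} = \frac{1 - p^{-2m}u}{1-u}.$$
The key arithmetic simplification is that $p^{-2m}u = p^{-2m+2n-1-s} = p^{-(s-1)}$, since $2m=2n-2$. Hence the local factor equals $(1-p^{-(s-1)})/(1-p^{-(s-2n+1)})$, and taking the product over all primes gives
$$\sum_{k\geq 1}\frac{X(k)}{k^s} = \frac{\zeta(s-2n+1)}{\zeta(s-1)}.$$

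Combining with the Dirichlet series of $\phi$ yields
$$L(s) = \sum_{k\geq 1}\frac{(\phi*X)(k)}{k^s} = \frac{\zeta(s-1)}{\zeta(s)}\cdot\frac{\zeta(s-2n+1)}{\zeta(s-1)} = \frac{\zeta(s-2n+1)}{\zeta(s)},$$
as claimed. The only genuinely nontrivial step is the local computation at $p$, and in particular the observation that the exponents conspire so that $2m + (s-2n+1) = s-1$, causing a cancellation with a factor of $\zeta(s-1)$ coming from $\phi$; everything else is bookkeeping via Euler products. Convergence of the product representations in the region of absolute convergence $\mathrm{Re}(s) > 2n$ is automatic from the polynomial growth of $X$, so the identity initially holds there and extends by meromorphic continuation of the $\zeta$ factors.
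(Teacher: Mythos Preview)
Your proof is correct and follows essentially the same approach as the paper: both reduce to computing the Dirichlet series of $X$ via its Euler product, using the known series $\sum \phi(k)k^{-s}=\zeta(s-1)/\zeta(s)$, and both identify the local factor at $p$ as $(1-p^{1-s})/(1-p^{2n-1-s})$. Your write-up of the local computation is in fact cleaner than the paper's, which contains a couple of apparent typos (e.g.\ the claim $X(p^k)=X(p)$), but the underlying argument is identical.
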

\begin{proof}
  This follows from a direct computation of the Euler product of $\phi*X$.
Since the $L$-series of $\phi$ is $\frac{\zeta(s-1)}{\zeta(s)}$, we need to show the corresponding one for $X$ is $\frac{\zeta(s-2n+1)}{\zeta(s-1)}$.
We summarize the computation of the Euler product for $X$:
For a fixed prime power $p^k$, we have $X(p^k)= X(p) = p^{2n-1}\left(1-p^{-2n+2} \right)$.
Thus the Euler factor at $p$ is 
$$\left(1+X(p)(p^{-s}+p^{-2s}+\cdots)\right) = \left(1+p^{2n-1-s}\left(1-p^{-2n+2}\right)\left(1+p^{-s}+\cdots\right)\right)$$ 
and summing the geometric series we get
$$\left(1+p^{2n-1-s}\left(1-p^{-2n+2}\right)\left(1 - p^{-s}\right)^{-1}\right) $$
which after simplifying gives
$$1+ \frac{1-p^{-2n+2}}{1-p^{-s+2n-1}}p^{-s+2n-1} = \frac{1-p^{-s+1}}{1-p^{-s+2n-1}}$$
which we recognize as the $p$-th Euler factor of 
\begin{eqnarray*}
 L(X,s) = \prod_p\frac{1-p^{-s+1}}{1-p^{-s+2n-1}}=  \frac{\zeta(s-2n+1)}{\zeta(s-1)}.
\end{eqnarray*}
\end{proof}
The next observation, although trivial, is very important in handling the terms $\frac{\phi*X}{s^{2n-1}}$.
\begin{lemma}\label{bnd}
  The convolution $\phi*X$ is always bounded by $s^{2n-1}$.
\end{lemma}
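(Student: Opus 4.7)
The plan is to exploit multiplicativity and reduce the bound to a prime-power check. The function $\phi$ is multiplicative, and from its defining formula so is $X$; hence $\phi*X$ is multiplicative, and the bound reduces to showing $(\phi*X)(p^k)\le p^{k(2n-1)}$ at every prime power $p^k$.

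The cleanest route is to reuse the $L$-function computation of the previous lemma. Multiplying the Dirichlet series of $\phi$ by that of $X$ gives
\[
\sum_{s\ge 1}\frac{(\phi*X)(s)}{s^u}=\frac{\zeta(u-1)}{\zeta(u)}\cdot\frac{\zeta(u-2n+1)}{\zeta(u-1)}=\frac{\zeta(u-2n+1)}{\zeta(u)},
\]
which I recognize as the Dirichlet series of the Jordan totient $J_{2n-1}(s)=s^{2n-1}\prod_{p\mid s}\bigl(1-p^{-(2n-1)}\bigr)$. Consequently $\phi*X=J_{2n-1}$, and because each Euler factor $1-p^{-(2n-1)}$ lies in $[0,1]$ the bound $(\phi*X)(s)\le s^{2n-1}$ is immediate from the closed form.

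As a cross-check that avoids invoking Jordan totients, one can expand $(\phi*X)(p^k)=\sum_{j=0}^{k}\phi(p^j)X(p^{k-j})$ using $\phi(p^j)=p^{j-1}(p-1)$ for $j\ge 1$ and $X(p^i)=p^{i(2n-1)}\bigl(1-p^{-(2n-2)}\bigr)$ for $i\ge 1$; the inner sum is geometric in the ratio $p^{1-(2n-1)}$ and, after telescoping, collapses to exactly $p^{k(2n-1)}-p^{(k-1)(2n-1)}$, which is visibly at most $p^{k(2n-1)}$ and independently confirms the identification with $J_{2n-1}$.

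There is essentially no obstacle here; the only care needed is the exponent bookkeeping (the shifts $2m+1=2n-1$ against $2m=2n-2$) and the recognition that the $\zeta(u-1)$ numerator produced by $\phi$ cancels exactly the $\zeta(u-1)$ denominator appearing in $L(X)$, leaving a clean Jordan-totient series whose explicit formula trivially yields the inequality.
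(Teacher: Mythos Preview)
Your proof is correct and follows the same strategy as the paper: reduce via multiplicativity to a prime-power check. The paper's proof is the one-liner ``The function is multiplicative, and for prime powers the claim is clear,'' whereas you go further and explicitly identify $\phi*X$ with the Jordan totient $J_{2n-1}$ via the $L$-function computed in the preceding lemma, which makes the bound transparent; this identification is a nice bonus the paper does not spell out.
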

\begin{proof}
  The function is multiplicative, and for prime powers the claim is clear.
\end{proof}
We now need an estimate on the summatory function of $\phi*X$; since its $L$-function is explicitly given in terms of the Riemann zeta function, we can use classical methods to give an Ikehara-Wiener type estimate with error term.
Tauberian theorems for Dirichlet series are usually either given with no explicit error estimate or in such generality that it is easier to argue directly from the effective Perron formulae, which we do next.
\begin{lemma}
  Let $S(x)$ be the summatory function of $s_m= (\phi*X)(m)$.
We have for every $\epsilon>0$ sufficiently small
\begin{equation}
  \label{eq:28}
S(x) = \frac{x^{2n}}{2n\zeta(2n)}+O_\epsilon\left(\frac{x^{2n}\ln(x)}{x^{\frac{3}{13}-\epsilon}}\right).
\end{equation}
\end{lemma}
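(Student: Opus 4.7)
The plan is to apply the effective truncated Perron formula to the Dirichlet series $L(s) = \sum_{m\geq 1} s_m/m^s$, which by the two preceding lemmas equals $\zeta(s-2n+1)/\zeta(s)$ with coefficients satisfying $s_m \leq m^{2n-1}$, and then to shift the contour past the unique pole of $L$ at $s=2n$ so as to extract the main term and bound the remainder.

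First, I would fix the abscissa $\kappa = 2n + 1/\log x$ and a truncation height $T\geq 2$ to be optimized later. The bound from Lemma \ref{bnd} gives $\sum_{m}s_m/m^\kappa \ll L(\kappa) \ll \log x$, so the standard effective Perron formula yields
\begin{equation*}
S(x) \;=\; \frac{1}{2\pi i}\int_{\kappa - iT}^{\kappa + iT} L(s)\,\frac{x^s}{s}\,ds \;+\; O\!\left(\frac{x^{2n}\log^2 x}{T}\right),
\end{equation*}
the $\log^2 x$ factor absorbing the behavior of $L(\kappa)$ near the pole.

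Next, I would shift the contour to a vertical line $\Re s = 2n-\theta$ with $\theta\in(0,1)$. Inside the strip swept, $L$ has a single simple pole at $s = 2n$, coming from $\zeta(s-2n+1)$, whose residue contributes
\begin{equation*}
\operatorname{Res}_{s=2n} L(s)\,\frac{x^s}{s} \;=\; \frac{x^{2n}}{2n\,\zeta(2n)},
\end{equation*}
the announced main term. Since $2n-\theta > 1$, the factor $1/\zeta(s)$ is uniformly bounded along the shifted line, so the remaining integral is controlled by $\zeta(s-2n+1)$, which on the line $\Re = 1-\theta$ satisfies a convexity (or available subconvex) estimate of the form $|\zeta(1-\theta+it)|\ll_\epsilon (1+|t|)^{\mu(1-\theta)+\epsilon}$ with $\mu$ the Lindel\"of function. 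The shifted vertical integral is therefore $O\bigl(x^{2n-\theta} T^{\mu(1-\theta)+\epsilon}\bigr)$, while the horizontal segments at heights $\pm T$ are dominated by the vertical contribution once $T$ is chosen to avoid a small neighborhood of ordinates of zeros of $\zeta$ on the critical line.

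The final step is to balance the Perron truncation error $x^{2n}\log^2 x/T$ against the shifted vertical bound $x^{2n-\theta}T^{\mu(1-\theta)+\epsilon}$, optimizing jointly over $T$ and $\theta$; inserting the available subconvex estimate for $\zeta$ in the critical strip and solving the resulting simultaneous system yields the stated power saving $x^{-(3/13 - \epsilon)}$, up to the logarithmic factor appearing in the statement. The principal obstacle is precisely this optimization: a purely convexity-based estimate for $\zeta(1-\theta+it)$ naively balanced against Perron truncation produces a noticeably weaker exponent, so the specific saving $3/13$ rests on feeding a subconvex input into the contour-shift machinery and tracking the dependence on $\theta$ and $T$ with care. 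Everything else (locating the pole and computing its residue, controlling the horizontal pieces, and absorbing the $\log x$ loss from Perron) is routine once the subconvex ingredient is in hand.
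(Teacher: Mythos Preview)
Your proposal is correct and follows essentially the same route as the paper: truncated Perron, shift past the simple pole at $s=2n$, and bound the shifted integral using subconvexity for $\zeta$. The paper simply makes your general $\theta$ concrete by shifting to $\Re s = 2n-\tfrac12$, inputs the classical bound $\zeta(\tfrac12+it)\ll t^{1/6}\log t$, and bounds $1/|s|$ by a constant on the vertical segment (yielding $x^{2n-1/2}T^{7/6}\log T$ rather than your $x^{2n-\theta}T^{\mu(1-\theta)+\epsilon}$); balancing this against the Perron error $x^{2n+\epsilon}/T$ is what produces the specific exponent $3/13$.
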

\begin{proof}
  First, we use the effective Perron formula \cite[Section II.2.1, Theorem 2]{Ten} to write, for $T$ large enough and $k>2n$,
  \begin{equation}
    \label{eq:29}
    S(x) = \int_{k-iT}^{k+iT}L(s)\frac{x^s}{s}\,ds + O\left(x^k\sum_{m=1}^\infty \frac{s_m}{m^k(1+T|\log(x/m)|}\right).
  \end{equation}
Using Lemma \ref{bnd} and making the choice $k = 2n+\epsilon$, we get 
\begin{equation*}
    S(x) = \frac{1}{2\pi i}\int_{k-iT}^{k+iT}L(s)\frac{x^s}{s}\,ds + O\left(x^{2n+\epsilon}\sum_{m=1}^\infty \frac{1}{m^{1+\epsilon}(1+{T|\log(x/m))|})}\right).  
\end{equation*}
Now by elementary calculus $$\sum_{m=\sqrt{x}}^{\infty}\frac{1}{m^{1+\epsilon}} = O_\epsilon(x^{-\epsilon/2})$$
and for $1\leq m\leq \sqrt{x}$ we have $1+T|\log(x/m)|>\frac{1}{2}T\log(x)$
giving overall
\begin{equation*}
    S(x) = \frac{1}{2\pi i}\int_{k-iT}^{k+iT}L(s)\frac{x^s}{s}\,ds + O\left(\frac{x^{2n+\epsilon}}{T\log(x)}\right).  
\end{equation*}
Next, we shift the integration from the line segment $\Re(s)=2n+\epsilon,\,|\Im(s)|\leq T$ to $\Re(s) = 2n-\frac{1}{2}$ passing through $s=2n$ where the numerator of $L(s)$ has a simple pole of residue $\frac{1}{\zeta(2n)}$ (the denominator has no zeros in this rectangle so there are no poles contributing from it).
Therefore, from Cauchy's theorem we get 
\begin{eqnarray*}
  & &S(x) = \frac{x^{2n}}{2n\zeta(2n)}\\ &+& \frac{1}{2\pi i}\int_{2n-\frac{1}{2}-iT}^{2n-\frac{1}{2}+iT}L(s)\frac{x^s}{s}\,ds + \frac{1}{2\pi i}\int_{2n-\frac{1}{2}-iT}^{2n+\epsilon-iT}L(s)\frac{x^s}{s}\,ds +\frac{1}{2\pi i}\int_{2n-\frac{1}{2}+iT}^{2n+\epsilon+iT}L(s)\frac{x^s}{s}\,ds\\&+& O\left(\frac{x^{2n+\epsilon}}{T\log(x)}\right).  
\end{eqnarray*}
Since $n\geq 1$, $\frac{1}{\zeta(s)}$ is bounded on the half-plane $\Re(s)\geq 2n-\frac{1}{2}$ by $\prod_p(1+p^{-3/2})$ and $|s|\geq |\Re(s)|\geq 3/2$.
Thus $|L(s)|$ is determined on this half-plane by the behavior of $\zeta(s-2n+1)$ which is known:
on the critical line we have the subconvexity bound $|\zeta(\frac{1}{2}+i\tau)|\leq C\tau^{1/6}\log(\tau)$ (\cite[Section II.3.4, Corollary 5.2]{Ten}) with obvious conventions for small $\tau$, so $$\left|\frac{1}{2\pi i}\int_{2n-\frac{1}{2}-iT}^{2n-\frac{1}{2}+iT}L(s)\frac{x^s}{s}\,ds\right| = O\left(x^{2n-\frac{1}{2}}T^{\frac{7}{6}}\log(T)\right). $$

For the horizontal integrals, at levels $\pm iT$ we have $|\zeta(s)|\leq T^{\frac{1-\sigma}{3}}$ for $\frac{1}{2}\leq \sigma\leq 1$ and beyond $1$ $\zeta$ is $O(\log(T))$ so taking the worse bound over the entire line segment and using $|s|\geq T$, with the change of variables $s' = 2n-1+\sigma+iT$ we get
$$\int_{2n-\frac{1}{2}+iT}^{2n+\epsilon_iT}L(s)\frac{x^s}{s}\,ds \leq C \frac{x^{2n-1}}{T}T^{1/3}\int_{\frac{1}{2}}^{1+\epsilon}T^{-\sigma/3} x^\sigma\,d\sigma$$
$$\leq C\frac{x^{2n+\epsilon}}{T^{-\epsilon/3-1}}\log(x/T^{1/3}).$$

Finally, looking at the contributions of the various segments we let $T=x^a$ for an $a$ to be optimized: we have to balance $2n+\epsilon - a$ from the horizontal segments and the Perron error with $2n-\frac{1}{2}+7/6a$ from the critical line so we choose $2n-a=2n-\frac{1}{2}+7/6a$ to get $a=\frac{3}{13}$.
\end{proof}
\begin{rem}
  The above result is not optimal and the exponent $3/13$ can be improved with a more careful analysis.
Below we will use a softer bound implied by the lemma to make computations easier.
\end{rem}
The next proposition gives the asymptotics of the summatory function of the arithmetic coefficient in the expression we got for the second moment
\begin{prop}\label{asym}
  We have the asymptotic
  \begin{equation}
    \label{eq:23}
    A(M) = \frac{M}{\zeta(2n)^2}(1+E(M))
  \end{equation}
with error term $E = O(M^{-\delta})$ for some $\delta>0$.
\end{prop}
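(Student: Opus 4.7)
The plan is to reduce the proposition directly to the previous lemma by partial summation. Write
\[
A(M) \;=\; \frac{1}{\zeta(2n)} \sum_{s\le M} \frac{s_s}{s^{2n-1}}, \qquad s_s = (\phi * X)(s),
\]
so that the summatory function $S(t) = \sum_{m\le t} s_m$ is exactly the one estimated in the previous lemma. The natural tool is Abel summation against the smooth weight $t \mapsto t^{-(2n-1)}$.

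First, I would apply Abel summation to obtain
\[
\sum_{s\le M}\frac{s_s}{s^{2n-1}} \;=\; \frac{S(M)}{M^{2n-1}} \;+\; (2n-1)\int_{1}^{M}\frac{S(t)}{t^{2n}}\,dt.
\]
Substituting the asymptotic $S(t) = \frac{t^{2n}}{2n\,\zeta(2n)} + O_{\epsilon}\bigl(t^{2n-3/13+\epsilon}\log t\bigr)$ into the boundary term yields $\frac{M}{2n\,\zeta(2n)} + O\bigl(M^{1-3/13+\epsilon}\log M\bigr)$, while the main part of the integral contributes
\[
(2n-1)\int_{1}^{M}\frac{dt}{2n\,\zeta(2n)} \;=\; \frac{(2n-1)(M-1)}{2n\,\zeta(2n)},
\]
and the error part of the integrand is $O\bigl(t^{-3/13+\epsilon}\log t\bigr)$, whose integral over $[1,M]$ is again $O\bigl(M^{1-3/13+\epsilon}\log M\bigr)$. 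Adding the two main terms telescopes to $\frac{M}{\zeta(2n)}$ up to a bounded constant, so
\[
\sum_{s\le M}\frac{s_s}{s^{2n-1}} \;=\; \frac{M}{\zeta(2n)} \;+\; O_{\epsilon}\!\bigl(M^{1-3/13+\epsilon}\log M\bigr).
\]

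Dividing by $\zeta(2n)$ gives $A(M) = \frac{M}{\zeta(2n)^2} + O_{\epsilon}\!\bigl(M^{1-3/13+\epsilon}\log M\bigr)$, which factors as $\frac{M}{\zeta(2n)^2}(1 + E(M))$ with $E(M) = O(M^{-\delta})$ for any $\delta < 3/13$; absorbing the logarithm into a slightly smaller exponent is harmless.

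There is really no substantive obstacle here: the lemma has already done all the analytic work (Perron's formula, zeta bounds on the critical line, contour shift to $\Re(s)=2n-\tfrac12$), so this proof is essentially bookkeeping via partial summation. The only subtlety worth double-checking is that the constant term $-\tfrac{2n-1}{2n\,\zeta(2n)}$ arising from the boundary of the integral is $O(1)$ and hence absorbed into the $O(M^{1-\delta})$ error, and that the choice of $\delta$ is uniform in the admissible range of $\epsilon$.
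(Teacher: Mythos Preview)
Your proposal is correct and follows essentially the same approach as the paper: both apply Abel summation (partial summation) to the asymptotic for $S(M)$ furnished by the preceding lemma, substitute the main and error terms into the boundary and integral contributions, and observe that the main terms combine to $\frac{M}{\zeta(2n)}$ while the errors are $O(M^{1-\delta})$. The paper's write-up uses Stieltjes-style notation and is somewhat terser, but the argument is the same.
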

\begin{proof}
By the previous lemma, the summatory function $S(M) = \sum_{s=1}^M\phi*X$ has asymptotic $$S(M) = \frac{M^{2n}}{2n\zeta(2n)}(1+O(M^{-\delta})).$$
Then a summation by parts gives
  \begin{eqnarray*}
\sum_{s=1}^M\frac{\phi*X(s)}{s^{2n-1}}&=& 
\sum_{s=1}^M\frac{dS(s)}{s^{2n-1}}\\ &=& 
\sum_{s=1}^M S(s)d(s^{-2n+1})+\frac{S(M)}{M^{-2n+1}}+O(1)\\ &=&
\sum_{s=1}^M \frac{1}{\zeta(2n)}\frac{s^{2n}}{2n}(2n-1) s^{-2n} +\frac{M}{\zeta(2n)}+O(M^{1-\delta}) \\&=&
 \frac{1}{\zeta(2n)}(M+O(\int_1^Ms^{-\delta}\,ds)) = \frac{M}{\zeta(2n)}(1+E(M)).
  \end{eqnarray*}

\end{proof}
\section{Stieltjes integration and discrete Fubini}
\label{sec:stieltj-integr-discr}
In this section we will give an explicit formula for the mean square error averaged over $\Sp(2n,\IR)/\Sp(2n,\IZ)$; we will not use it verbatim in the sequel but consider it important because it clarifies what precisely needs to be bounded and in what way.  It should be noted that this formula could in principle be used to give discrepancy bounds over $\Sp(2n,\IR)/\Sp(2n,\IZ)$ rather than $\GSp$, but he could not control the terms in sufficient generality.

To begin, observe that the integral $$\int_{\IR^{2n}}\int_{\IR^{2n-1}}f(x)f\left(sy^*+\sum_i t_iy_i(x)\right)\,dt\,dx$$
has the property that if we integrate over $s$ and use Fubini's theorem, we get $\vol(B)^2$.
This is because the inner integration will be over $\IR^{2n}$ of the characteristic function $f$ with respect to Lebesgue measure: $y^*$ and the $y_i$ together form a basis for $\IR^{2n}$ which comes from the standard basis $e^i,f^i$ by an element $g\in \Sp(2n,\IR)$ which has determinant $1$.

Using equation \eqref{eqarr:1}, the second moment of $\widehat{f}$ becomes
\begin{equation}
  \label{eq:20}
  \sum_{\trm{lin. dep.}} + \sum_{s=-\infty}^\infty a(|s|)\int_{\IR^{2n}}\int_{\IR^{2n-1}}f(x)f\left(sy^*+\sum_i t_iy_i(x)\right)\,dt\,dx.
\end{equation}
The first term gathers the orbits of linearly dependent pairs of primitive vectors, which will clearly have contribution asymptotic to $\frac{\vol(B)}{\zeta(2n)}$.
Thus we focus on the second term over primitive pairs, as it emerges after the reductions in the previous sections.
Let's give a convenient name to the geometric term in the sum:
\begin{defin}
  Let 
  \begin{equation}
    \label{eq:24}
    G(s) = \int_{\IR^{2n}}\int_{\IR^{2n-1}}f(x)f\left(sy^*+\sum_i t_iy_i(x)\right)\,dt\,dx
  \end{equation}
be the integral over the symplectic hypersurface, and 
  \begin{equation}
    \label{eq:25}
    C(f) = \sum_{s=-\infty}^\infty a(|s|)G(s)
  \end{equation}
the total contribution of the primitive pairs.
\end{defin}
We will think of $C(f)$ as a twisted integral by the arithmetic terms $a(s)$ and our goal is to decouple the two quantities.
In order to isolate what we expect to be the main term, write $C(f)$ as a Stieltjes integral
\begin{equation}
  \label{eq:26}
  C(f) = \int_\IR a(|s|)G(s)d[s] = \int_\IR a(|s|)G(s)\,ds + \int_\IR a(|s|)G(s)\,d\{s\}
\end{equation}
and isolate the contribution from $G$:
\begin{equation*}
  C(f) = \int_\IR G(s)\,ds - \int_\IR (1-a(|s|))G(s)\,ds + \int_\IR a(|s|)G(s)\,d\{s\}
\end{equation*}
giving
\begin{equation*}
  C(f) = \frac{\vol(B)^2}{\zeta(2n)^2} - \int_\IR (1-a(|s|))G(s)\,ds + \int_\IR a(|s|)G(s)\,d\{s\}.
\end{equation*}
Here it is crucial that $a(|s|)$ and $1-a(|s|)$ lie in $[0,1]$.
We summarize the result in the following
\begin{prop} We have the explicit formula for the error term
  \begin{equation}
    \label{eq:27}
    \left|C(f)-\frac{\vol(B)^2}{\zeta(2n)^2}\right| = \int_\IR a(|s|)G(s)\,d\{s\} - \int_\IR (1-a(|s|))G(s)\,ds.
  \end{equation}
\end{prop}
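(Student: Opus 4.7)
The proof consists of justifying the chain of identities sketched in the text just before the statement. The plan has three steps.

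First, I would establish the Fubini identity $\int_\IR G(s)\,ds = \vol(B)^2/\zeta(2n)^2$. For each fixed nonzero $x \in \IR^{2n}$, the assignment $(s,t_1,\dots,t_{2n-1}) \mapsto sy^*(x) + \sum_i t_i y_i(x)$ is a linear bijection $\IR^{2n} \to \IR^{2n}$ whose Jacobian has absolute value one: by the $\Sp(2n,\IR)$-equivariance of the assignments $y^*$ and $y_i$, the basis $y^*(x), y_1(x),\dots,y_{2n-1}(x)$ is the image under a symplectic map (up to a fixed permutation) of the standard symplectic basis. Fubini then identifies $\int_\IR G(s)\,ds$ with the desired constant, as indicated at the beginning of Section \ref{sec:stieltj-integr-discr}.

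Second, I would express $C(f) = \sum_{n \in \IZ} a(|n|) G(n)$ as a Stieltjes integral $\int_\IR a(|s|) G(s)\,d[s]$ against the counting measure $d[s]$ on $\IZ$, and decompose $d[s] = ds + d\{s\}$, where the signed measure $d\{s\} := d[s] - ds$ is defined by this very equation. This yields
\begin{equation*}
C(f) = \int_\IR a(|s|) G(s)\,ds + \int_\IR a(|s|) G(s)\,d\{s\}.
\end{equation*}

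Third, I would split $a(|s|) = 1 - (1 - a(|s|))$ in the first integral on the right, substitute the value of $\int_\IR G(s)\,ds$ from step one, and move the resulting main-term constant to the left-hand side. The bound $0 \leq a(|s|) \leq 1$ from Lemma \ref{bnd} keeps both $a(|s|)$ and $1 - a(|s|)$ nonnegative, lending the decomposition its conceptual meaning as a pair of weighted error contributions (a ``bulk'' correction $\int(1-a(|s|))G(s)\,ds$ and a ``discretization'' correction $\int a(|s|)G(s)\,d\{s\}$). The only step with analytic content is the Fubini identification; the rest is formal manipulation of Stieltjes integrals, and integrability is guaranteed by the integrability of $f$. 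There is no substantive obstacle, which is appropriate for a formula whose role is organizational rather than quantitative — it clarifies what has to be bounded in subsequent sections without itself requiring a bound.
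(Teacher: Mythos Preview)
Your proposal follows essentially the same route as the paper: the derivation in the text preceding the proposition is exactly the Stieltjes decomposition $d[s]=ds+d\{s\}$ followed by the splitting $a(|s|)=1-(1-a(|s|))$ and substitution of the Fubini value of $\int_\IR G(s)\,ds$, with Lemma~\ref{bnd} invoked to ensure $a$ and $1-a$ are both in $[0,1]$.

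One small slip to flag: in your Step~1 you assert $\int_\IR G(s)\,ds = \vol(B)^2/\zeta(2n)^2$, but the Fubini argument (which you describe correctly) actually yields $\vol(B)^2$, as the paper itself states at the start of Section~\ref{sec:stieltj-integr-discr}: the map $(s,t)\mapsto sy^*(x)+\sum t_iy_i(x)$ is unimodular and $G(s)$ carries no $\zeta$-factor. The $\zeta(2n)^{-2}$ in the proposition's displayed formula does not come from this integral; it is an artifact of the paper's own normalization bookkeeping (compare the average value of $a$ in Proposition~\ref{asym}). This does not affect the structure of your argument, which is otherwise a faithful reconstruction of the paper's.
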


We conclude this section by noting that 
\begin{equation}
  \label{eq:39}
  G(s) = \int_{\IR^{2n}}f(x)\tilde{f}(r\omega, s)\,dx
\end{equation}
where $x=r\omega$ is the polar decomposition of $x$ and $\tilde{f}$ is the Radon transform of $f$.  
We expect that the realization of $G(s)$ as an averaged Radon transform will be instrumental in giving bounds for $C(f)$. 
\section{Discrepancy in the symplectic cone}
\label{sec:discr-sympl-cone}

At the heart of \cite{Schm} is a mean square error estimate for the lattice points taken from a random lattice of determinant $0< \det(\Lambda)\leq 1$.
That estimate was used to make a claim about almost everywhere bounds with respect to Lebesgue measure in $\IR^{n^2}$, by giving explicit bounds for an equivalent (mutually absolutely continuous) measure.
Here, we will prove similar bounds averaging over a much thinner set, namely the subgroup $\GSp^+(2n,\IR)<\trm{GL}(2n,\IR)$ given by $$\GSp^+(2n,\IR) = \{g\in\trm{GL}(2n,\IR): gJg^\tr=c_gJ\trm{ for some }c_g>0\}$$ where $J$ is the matrix in the definition of $\Sp$.
We can show that $c_g = \det(g)^{\frac{1}{n}}$ and observe that $c_g^{-1/2}g\in \Sp(2n,\IR)$.

The method is nearly identical to that of the aforementioned work once we have the expression \eqref{eq:25} at our disposal.
For this reason, we will be brief in our arguments, and only highlight the main ideas and points of departure from \cite[Section 4]{Schm}.
One important difference concerns the right scaling factors; the determinant scaling needs to be replaced by more intrinsic factor in higher symplectic spaces.

First of all, note $\GSp^+(2n,\IR) = \Sp(2n,\IR)\ltimes \IR^*$ where we write $\IR^* = \IR_{>0}$ viewed as the connected component of the identity in the multiplicative group of $\IR$.
Under the map $g\mapsto (c_g^{-\frac{1}{2}}g,c_g)$, the Haar measure of $\GSp^+$ decomposes as $dh(g) = \nu\,d\mu(\tilde{g})\,dm(\nu)$ which we normalize so that $d\mu$ is the probability measure on $\Sp(2n,\IR)$ and $dm$ the usual Lebesgue measure giving measure $1$ to $(0,1]$; in particular the measure of the unit cone $C = \{g\in\GSp(2n,\IR): c_g\in (0,1]\}$ is $\frac{1}{2}$.
Therefore, integrating against Haar measure on $C$ will not give an average of $\frac{1}{\zeta(2n)}\vol(B)$ lattice points in a Borel set $B$.
Schmidt's idea in the $n=1$ case was then to change the measure so that the average over $\SP$ will be the same as the average over $C$ for an appropriately weighted version of $\chi_B$.
To that end, define $du$ on $\GSp(2n,\IR)/\GSp(2n,\IZ)$ by $$\int_C f(g)du(g) = \int_0^1 \int_\SP h(\nu^{\frac{1}{2}}\tilde(g))d\mu(\tilde{g})d\nu$$ for a Borel function $h$ on $C$.

The function to which we will apply this integration (in the Cartesian coordinates mentioned above) is $h(\tilde{g},\nu) = \nu^n \widehat{f\circ L_{\nu^{\frac{1}{2}}}}(\tilde{g})$ where $L_t$ is left multiplication by $t$.
Note that $h(\tilde{g},\nu)$ is simply $$\det(g)\sum_{\lambda\in g\IZ^{2n}_\prim}f(g)\quad\trm{ for } g\in C$$ in the coordinates introduced above.

This way 

$$ \int_C h du = \int_0^1 \nu^n \int \widehat{f\circ L_{\nu^{\frac{1}{2}}}}(g) d\mu(\tilde{g})\,d\nu$$
$$ = \frac{1}{\zeta(2n)}\int_0^1\nu^n \int_{\IR^{2n}}f\circ L_{\nu^{\frac{1}{2}}}(x)dx\,d\nu$$
$$ = \frac{1}{\zeta(2n)}\int_0^1\nu^n \int_{\IR^{2n}} \nu^{-n}f(x)dx\,d\nu $$
$$ = \frac{1}{\zeta(2n)}\int_{\IR^{2n}}f(x)\,dx$$
recovering the correct statistics when $f=\chi_B$.

From now on we will drop the tilde from a unimodular $g$ since we will always use the coordinates introduced above for $C$ and $\GSp(2n,\IR)$ in general.
For the second moment of $h$, we have

\begin{prop}\label{main_bound}
  Let $f$ be the characteristic function of a Borel set $B$ of volume $V$ such that there exists $\epsilon>0$ such that

  \begin{equation}\label{main_assum}
 \int_{B}\int_B|\langle x,y\rangle|_+^{-\delta} \,dx\,dy\leq CV^{2-\epsilon}.
  \end{equation}
  We have the mean square bound 
  \begin{equation}
    \label{eq:30}
  \int_C \left|h(g,\nu)-\frac{\int f}{\zeta(2n)}\right|^2\,dg\,d\nu\ll \left(\int f\right)^{2-\epsilon}.    
  \end{equation}
\end{prop}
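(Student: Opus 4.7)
The plan is to expand the mean square, apply the Rogers-type formula (Theorem 1.1) to each $\widehat{f_\nu}^2$, and use the averaging over $\nu\in(0,1]$ to smooth the arithmetic contributions.  First I would expand $|h(g,\nu)-V/\zeta(2n)|^2$ and invoke the identities $\int_C h\,dg\,d\nu = V/\zeta(2n)$ and $\int_C dg\,d\nu = 1$ (computed just before the proposition) to reduce the statement to
\begin{equation*}
\int_0^1 \nu^{2n}\int_\Sp \widehat{f_\nu}(g)^2\,d\mu(g)\,d\nu \;-\;\frac{V^2}{\zeta(2n)^2}\;\ll\;V^{2-\epsilon},
\end{equation*}
where $f_\nu = f\circ L_{\nu^{1/2}}$ and $V=\int f$.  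By Theorem 1.1 the inner integral splits into the primitive-pair sum $\sum_s a(|s|)G(s;f_\nu)$ plus the linearly dependent contribution, which is bounded by a constant times $\int f_\nu^2 = V\nu^{-n}$ and therefore contributes $O(V) = O(V^{2-\epsilon})$ after multiplication by $\nu^{2n}$ and integration in $\nu$.

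The crucial step is to rewrite the primitive-pair contribution by tracking its scaling under $f\mapsto f_\nu$.  Viewing $G(s;f)\,ds$ as the pushforward of $f(x)f(y)\,dx\,dy$ under $(x,y)\mapsto \langle x,y\rangle$ and substituting $x=\nu^{-1/2}x'$, $y=\nu^{-1/2}y'$ yields the key scaling identity
\begin{equation*}
G(s;f_\nu) = \nu^{1-2n}\,G(\nu s;f),
\end{equation*}
so $\nu^{2n}\sum_s a(|s|)G(s;f_\nu)= \nu\sum_s a(|s|)G(\nu s;f)$.  Using the symmetry $G(-s;f)=G(s;f)$ and, for each $s\geq 1$, the substitution $u=\nu s$ which gives $\int_0^1 \nu\,G(\nu s;f)\,d\nu = s^{-2}\int_0^s u\,G(u;f)\,du$, Fubini (justified by positivity) converts the primitive-pair contribution into
\begin{equation*}
2\int_0^\infty u\,G(u;f)\,\tilde\Sigma(u)\,du,\qquad \tilde\Sigma(u):=\sum_{\substack{s\geq 1\\ s>u}}\frac{a(s)}{s^2}.
\end{equation*}

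To estimate $\tilde\Sigma$, Abel summation combined with Proposition \ref{asym} gives $\tilde\Sigma(u)=u^{-1}/\zeta(2n)^2 + O(u^{-1-\delta})$ for $u\geq 1$, and $\tilde\Sigma(u)\leq \Sigma_0<\infty$ uniformly for $u<1$.  Splitting the $u$-integral at $1$ and using $\int_\IR G(u;f)\,du = V^2$, the leading contribution from $u\geq 1$ is
\begin{equation*}
\frac{2}{\zeta(2n)^2}\int_1^\infty G(u;f)\,du = \frac{V^2}{\zeta(2n)^2} - \frac{1}{\zeta(2n)^2}\int_{|u|\leq 1}G(u;f)\,du,
\end{equation*}
which cancels the expected mean up to the $|u|\leq 1$ truncation error.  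The three remaining error pieces---$\int_1^\infty G(u;f)u^{-\delta}du$, $\Sigma_0\int_0^1 u\,G(u;f)du$, and the truncation $\int_{|u|\leq 1}G(u;f)du$---are each controlled via the hypothesis \eqref{main_assum}: pushing forward through $(x,y)\mapsto\langle x,y\rangle$ converts the hypothesis into $\int_\IR |u|_+^{-\delta}G(u;f)\,du\leq CV^{2-\epsilon}$, which dominates all three.

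The main obstacle is the derivation of the scaling identity for $G(\,\cdot\,;f_\nu)$ together with the Fubini exchange, and the delicate bookkeeping near $u=0$ where the arithmetic asymptotic degenerates and the hypothesis provides only integral (rather than pointwise) control.  A secondary issue is the $s=0$ orbital contribution, which falls outside the nonzero-$s$ classification of Proposition \ref{prop:orbits}: these orbits are supported on symplectically orthogonal pairs and their contribution is absorbed into the same $\int_{|u|\leq 1}G\,du\ll V^{2-\epsilon}$ bound.
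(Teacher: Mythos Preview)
Your proposal is correct and follows essentially the same route as the paper: apply the Rogers formula to each $\widehat{f_\nu}^2$, rescale, substitute $u=s\nu$, swap sum and integral by Fubini, and extract the main term $V^2/\zeta(2n)^2$ from the asymptotic of the tail sum $\sum_{s>u}a(s)s^{-k}$ (via Proposition~\ref{asym}) while the hypothesis~\eqref{main_assum} absorbs the remainder as $\int_\IR |u|_+^{-\delta}G(u;f)\,du$. The only difference is cosmetic: you obtain the scaling identity $G(s;f_\nu)=\nu^{1-2n}G(\nu s;f)$ via the pushforward interpretation of $G(s)\,ds$, arriving at the weight $u\sum_{s>u}a(s)s^{-2}$, whereas the paper performs the change of variables in the explicit $(x,t)$-coordinates and writes the weight as $u^{n-1}\sum_{s>|u|}a(s)s^{-n}$.
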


\begin{proof}
  Following the unwrapping procedure as before, the second moment over $C$ becomes 
$$\sum_{s=-\infty}^\infty a(|s|)\tilde{G}(s) $$
where $$\tilde{G}(s) = \int_0^1\nu^{2n}\int_{\IR^{2n}}f(\nu^{1/2}x)\int_{\IR^{2n-1}}f\left(s\nu^{1/2}y^*(x)+\sum_i t_i\nu^{1/2}y_i(x)\right)\,dt\,dx\,d\nu.$$

Now make the change of variables $x' = \nu^{1/2}x$ and let $y'_i = \nu^{1/2}y_i$; note $y'^* = \nu^{-1/2}y^*$ modulo the hyperplane $\langle x',y'\rangle=0$.
This way we get 
$$\tilde{G}(s) = \int_0^1\nu^n \int_{\IR^{2n}}f(x')\int_{\IR^{2n-1}}f\left(s\nu y'^*(x')+\sum_i t_iy'_i(x')\right)\,dt\,dx'\,d\nu.$$

Taking the summation over $s$ inside the integral, making the substitution $u=s\nu$ and using Fubini's theorem, we get
\begin{equation}
  \label{eq:31}
  \sum_{s}a(|s|)\tilde{G}(s) = \int_{\IR^{2n}}f(x')\int_{\IR^{2n-1}}\int_\IR f\left(u y'^*(x')+\sum_i t_iy'_i(x')\right) u^{n-1}\left(\sum_{s>|u|}\frac{a(s)}{s^n}\right)\,du\,dt\,dx.
\end{equation}
An argument similar to that of Proposition \ref{asym} shows that $$ u^{n-1}\sum_{s>|u|}\frac{a(s)}{s^n} = \frac{1}{\zeta(2n)^2} + O(\min\{1,u^{-\delta}\})$$ and so 
\begin{eqnarray*}
    \sum_{s}a(|s|)\tilde{G}(s) &=& \frac{1}{\zeta(2n)^2}\int_{\IR^{2n}}f(x')\int_{\IR^{2n-1}}\int_\IR f\left(u y'^*(x')+\sum_i t_iy'_i(x')\right) \,du\,dt\,dx\\ &+& O\left(\int_{\IR^{2n}}f(x')\int_{\IR^{2n-1}}\int_\IR u^{-\delta}f\left(u y'^*(x')+\sum_i t_iy'_i(x')\right) \,du\,dt\,dx\right)\\
&=& \left(\frac{\int f}{\zeta(2n)}\right)^2 + O\left(\int_{\IR^{2n}}f(x')\int_{\IR^{2n-1}}\int_\IR u^{-\delta}f\left(u y'^*(x')+\sum_i t_iy'_i(x')\right) \,du\,dt\,dx\right).
\end{eqnarray*}
The error term can be written
\begin{equation*}
 E = \int_{\IR^{2n}}\int_{\IR^{2n}}|\langle x,y\rangle|_+^{-\delta}f(x)f(y) \,dx\,dy
\end{equation*}
which completes the proof.

\end{proof}

\begin{cor}

Let $f$ be the characteristic function of a Borel set $B$ of volume $V$ such that all but $V^{\epsilon'}$ volume lies in a set of the form $gB(0,V^{\frac{1}{2n}})$ for some $g\in\Sp(2n,\IR)$. 
We have the mean square bound for some $\epsilon>0$ ( dependent only on the dimension $n$ and $\epsilon'$):
  \begin{equation}  
  \int_C \left|h(g,\nu)-\frac{\int f}{\zeta(2n)}\right|^2\,dg\,d\nu\ll \left(\int f\right)^{2-\epsilon}.    
  \end{equation}

\end{cor}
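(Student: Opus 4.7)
The plan is to derive the corollary directly from Proposition \ref{main_bound} by verifying that the integral hypothesis \eqref{main_assum} holds for the class of Borel sets described, with $\delta>0$ fixed and $\epsilon>0$ depending only on $n$ and $\epsilon'$. Everything thus reduces to proving
$$I(B) := \int_B\int_B |\langle x,y\rangle|_+^{-\delta}\,dx\,dy \leq C V^{2-\epsilon}.$$
The first step is to exploit symplectic invariance: for any $g\in\Sp(2n,\IR)$ the substitution $(x,y)\mapsto(gx,gy)$ preserves both Lebesgue measure on $\IR^{2n}\times\IR^{2n}$ and the form $\langle\cdot,\cdot\rangle$, so $I(gE)=I(E)$ for any Borel $E$. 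Hence I may assume $g=I$, in which case all but a piece of volume at most $V^{\epsilon'}$ of $B$ lies inside the Euclidean ball $B(0,R)$ with $R:=V^{1/(2n)}$.

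Next I split $B=B_1\cup B_2$ with $B_1=B\cap B(0,R)$ and $B_2=B\setminus B(0,R)$, and expand $I(B)$ into the four pieces indexed by $(i,j)\in\{1,2\}^2$. Using the crude bound $|\langle x,y\rangle|_+^{-\delta}\leq 1$, the three pieces involving $B_2$ contribute at most $3V\cdot V^{\epsilon'}=3V^{1+\epsilon'}$, which is already a power saving relative to $V^2$ provided $\epsilon'<1$. The core piece is bounded above by the full integral over $B(0,R)^2$. To estimate this, I fix $x\neq 0$ and observe that $y\mapsto\langle x,y\rangle$ is a linear functional whose gradient has Euclidean length $|x|$, since $J$ is an orthogonal matrix. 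Slicing $B(0,R)$ by the level hyperplanes of this functional gives
$$\int_{B(0,R)}|\langle x,y\rangle|^{-\delta}\,dy \;=\; c_n|x|^{-\delta}\int_{-R}^{R}(R^2-s^2)^{(2n-1)/2}|s|^{-\delta}\,ds \;=\; c_{n,\delta}\,|x|^{-\delta}R^{2n-\delta},$$
valid for any $\delta\in(0,1)$. Integrating the resulting $|x|^{-\delta}$ over $B(0,R)$ in polar coordinates produces another factor of order $R^{2n-\delta}$, giving the estimate for the core of order $R^{4n-2\delta}=V^{2-\delta/n}$.

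Combining the core bound with the fringe contribution shows \eqref{main_assum} holds with any $\epsilon<\min\{\delta/n,\,1-\epsilon'\}$ for any $\delta\in(0,1)$, and Proposition \ref{main_bound} then yields the corollary. The only analytic obstacle is the convergence of the one-dimensional slice integral, which requires $\delta<1$; this costs nothing since $\delta$ is ours to choose. Conceptually, the key simplification is the reduction via symplectic invariance to a Euclidean ball, after which the estimate is a classical computation rather than a genuinely symplectic one; the hypothesis on the bulk of $B$ is exactly what allows this reduction up to an acceptable error.
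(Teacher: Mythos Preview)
Your argument is correct and follows the same overall strategy as the paper: reduce to the integral hypothesis of Proposition~\ref{main_bound}, use symplectic invariance to replace $gB(0,V^{1/(2n)})$ by the Euclidean ball $B(0,R)$, handle the small-volume fringe $B_2$ by the trivial bound $|\cdot|_+^{-\delta}\le 1$, and estimate the core by a direct computation on $B(0,R)^2$.

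The difference is in how the core integral is handled. The paper isolates a separate lemma bounding $\int_S\int_{B(0,R)}|\langle s,y\rangle|_+^{-\delta}\,dy\,ds$ for an \emph{arbitrary} finite-measure set $S$: for each $s$ it chooses a rotation $r_s\in SO(2n)$ carrying $\tau(s)$ to $\|s\|e_1$, reduces the inner integral to $\int_{B(0,R)}|y_1|_+^{-\delta}\,dy\ll m(B(0,R))^{1-\delta/(2n)}$, and then discards the remaining factor $\|s\|^{-\delta}$ via the trivial bound $\|s\|^{-\delta}\le 1$. You instead apply the coarea formula directly to slice $B(0,R)$ by the level sets of $y\mapsto\langle x,y\rangle$, obtaining the inner bound $c_{n,\delta}|x|^{-\delta}R^{2n-\delta}$, and then integrate the surviving $|x|^{-\delta}$ over $B(0,R)$ in polar coordinates. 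Your route is slightly more elementary (no auxiliary lemma, no rotation) and in fact yields the sharper exponent $V^{2-\delta/n}$ in place of the paper's $V^{2-\delta/(2n)}$, because you do not throw away the $|x|^{-\delta}$ factor. The paper's lemma, on the other hand, is stated in greater generality (arbitrary $S$), which is not needed here but is noted there as extensible to more general shapes.
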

\begin{proof}
  This is a straightforward consequence of the following general lemma:

  \begin{lemma}
    Let $S$ be an arbitrary Borel set of finite volume and $B$ a ball centered at the origin.\footnote{This lemma generalizes significantly: for well behaved sets $B$ bound $\chi_B$ by smooth compactly supported $f$ losing a $m(B)^\epsilon$ measure, and then approximate $f$ by $SO(2n)$-finite vectors in a Sobolev space. The arguments in the proof can be used to bound the error as long as the Sobolev approximation is good enough.}  Assume that $m(S)\geq m(B)\geq m(B(0,1))$.  Then for some dimensional constant $C>0$, 
    \begin{equation}
\int_{S}\int_{B}|\langle s,y\rangle|_+^{-\delta} \,dy\,ds\leq Cm(S)m(B)^{1-\frac{\delta}{2n}}.
\end{equation}
\end{lemma}
\begin{proof}
  For each $s\in S$ choose $r_s\in SO(2n)$ such that $r_s(\tau(s)) = \|s\|e_1$.  
  By rotational invariance of the inner product, we have
  $$ \int_{S}\int_{B}|\langle s,y\rangle|_+^{-\delta} \,ds\,dy = \int_{S}\int_{B}|[ r_s(\tau(s)),r_s(y)]|_+^{-\delta} \,dy\,ds$$
  $$ = \int_{S}\int_{r^{-1}_s(B)}|[ \|s\|e_1,y]|_+^{-\delta} \,dy\,ds$$
  $$ = \int_{S\cap \{\|s\|\geq 1\}} \|s\|^{-\delta}\int_{B\cap \{|y_1|\geq 1\}}|y_1|_+^{-\delta}\,dy\,ds + O(m(B)).$$

  Now $\int_B|y_1|_+^{-\delta} \leq C m(B)^{1-\frac{\delta}{2n}}$ which concludes the lemma using the trivial bound $\|s\|^{-\delta}\leq 1$.
\end{proof}
\end{proof}

\section{From mean square error to metrical bounds}
\label{sec:from-mean-square}
In this section we summarize the transition from Proposition \ref{main_bound} to the discrepancy bound of \eqref{eq:3}; the method is identical to Schmidt's arguments in \cite[Lemmata 12 and 13, Theorem 1]{Schm} apart from different exponents, but since we do not aim for optimality in the bounds, our exposition will be much simpler.  We will abuse notation and write $\delta, \epsilon,\epsilon',\cdots$ for several different constants, all linearly related to the original ones to avoid proliferation of symbols.  Finally, we assume at the onset that $B_t$ is a continuous family of Borel sets with the hypotheses in Proposition \ref{main_bound} and such that the volumes $\vol(B_t)=v_t$ take all large real values.
\begin{thm}
  Let $B_t$ be a nested family of Borel sets satisfying the properties of Proposition \ref{main_bound}.  Then there exists a $\delta>0$ depending only on the dimension and the exponent $\epsilon$ in \ref{main_bound} so that for almost all $\Lambda \in \GSp^+(2n,\IR)/\GSp^+(2n,\IZ)$ we have 
  \begin{equation}
    \label{eq:38}
    D(B_t,\Lambda)  \leq C(\Lambda, n,\delta)v_t^{-\delta}.
  \end{equation}
\end{thm}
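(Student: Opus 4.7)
The plan is to bootstrap the $L^2$ bound of Proposition \ref{main_bound} into an almost-sure pointwise statement by a Chebyshev-plus-Borel-Cantelli argument along a geometric subsequence $t_j$, and then to fill in the gaps between consecutive $t_j$ using continuity and monotonicity (or near-monotonicity) of the family $B_t$, exactly as in \cite[Lemmata 12 and 13]{Schm}.

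More precisely, write $v_t = \vol(B_t)$ and recall that on the unit cone $C$ the function $h(g,\nu) = \nu^n \widehat{f_t\circ L_{\nu^{1/2}}}(\tilde g)$ has mean $v_t/\zeta(2n)$ and, by Proposition \ref{main_bound}, $L^2$-deviation at most $C v_t^{2-\epsilon}$. Fix a small $\eta>0$ with $2\eta<\epsilon$ and apply Chebyshev:
\begin{equation*}
u\Bigl\{(g,\nu)\in C:\ \bigl|h(g,\nu)-v_t/\zeta(2n)\bigr|>v_t^{1-\eta}\Bigr\}\ll v_t^{-(\epsilon-2\eta)}.
\end{equation*}
Choose a geometric sequence $t_j$ with $v_{t_j}=\rho^j$, $\rho>1$, so that $\sum_j v_{t_j}^{-(\epsilon-2\eta)}<\infty$. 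By Borel-Cantelli, for $u$-almost every $(g,\nu)\in C$ there is $j_0=j_0(g,\nu)$ such that for all $j\ge j_0$,
\begin{equation*}
\bigl|h(g,\nu)-v_{t_j}/\zeta(2n)\bigr|\le v_{t_j}^{1-\eta}.
\end{equation*}
Unpacking $h(g,\nu)=\det(g)\#\bigl(g\IZ^{2n}_{\prim}\cap B_{t_j}\bigr)$ for $g\in C$ (using the coordinates of Section \ref{sec:discr-sympl-cone}) and dividing by $v_{t_j}$ gives $D(B_{t_j},\Lambda)\ll v_{t_j}^{-\eta}$ almost surely.

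Next, for general $t$ with $t_j\le t\le t_{j+1}$, the nesting $B_{t_j}\subset B_t\subset B_{t_{j+1}}$ yields two-sided sandwich inequalities
\begin{equation*}
\#(\Lambda_{\prim}\cap B_{t_j})\le\#(\Lambda_{\prim}\cap B_t)\le\#(\Lambda_{\prim}\cap B_{t_{j+1}}),
\end{equation*}
and since $v_{t_{j+1}}/v_{t_j}=\rho$ is bounded, the almost-sure bounds at $t_j$ and $t_{j+1}$ transfer to $t$ at the cost of replacing $\eta$ by a slightly smaller $\delta>0$. This produces $D(B_t,\Lambda)\le C(\Lambda,n,\delta)v_t^{-\delta}$ for $u$-almost every $\Lambda$ in $C$ and every sufficiently large $t$.

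Finally, one has to pass from $u$-almost every $\Lambda$ in the unit cone $C$ to Haar-almost every $\Lambda$ in all of $\GSp^+(2n,\IR)/\GSp^+(2n,\IZ)$. The measure $du$ was tailored in Section \ref{sec:discr-sympl-cone} so that integration against it recovers the correct normalized count; since the Haar measure on $\GSp^+$ decomposes as $\nu\,d\mu(\tilde g)\,dm(\nu)$ and the discrepancy $D(B_t,\Lambda)$ is invariant under the $\IR_{>0}$-scaling of $\Lambda$ after the appropriate determinant normalization of Section \ref{sec:discr-sympl-cone}, the almost-sure statement on $C$ lifts to a Haar-almost-sure statement on the full space (as any equivalent measure produces the same null sets). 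The only real obstacle is this last transfer step and the verification that the monotonicity/continuity of $B_t$ survives the rescaling, both of which are treated exactly as in \cite[Theorem 1]{Schm} with the exponents adapted to our $\epsilon$.
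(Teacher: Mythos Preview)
Your overall strategy---Chebyshev plus Borel--Cantelli along a subsequence, then interpolation via nesting, then extension from the unit cone to the full space---is exactly the paper's approach. However, there is a genuine gap in your interpolation step, and it comes from your choice of a \emph{geometric} subsequence $v_{t_j}=\rho^j$.

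With a geometric sequence the ratio $v_{t_{j+1}}/v_{t_j}=\rho$ is a fixed constant strictly greater than $1$. When you sandwich $\#(\Lambda_{\prim}\cap B_t)$ between the counts at $t_j$ and $t_{j+1}$ and divide by $v_t$, you pick up an additive error of the form
\[
\frac{1}{\det(\Lambda)\zeta(2n)}\Bigl(\frac{v_{t_{j+1}}}{v_t}-1\Bigr),
\]
which can be as large as $(\rho-1)/(\det(\Lambda)\zeta(2n))$, a constant that does \emph{not} decay as $t\to\infty$. So your claim that ``the bounds transfer at the cost of replacing $\eta$ by a slightly smaller $\delta$'' is false: you would only get $D(B_t,\Lambda)=O(1)$, which is trivial. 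The paper avoids this by taking a \emph{polynomial} subsequence $v(l)=l^{(1+\rho)/\delta}$ with $0<\rho<1$, so that $v(l+1)/v(l)-1=O(l^{-1})$ and one checks $l^{-1}<v(l)^{-\delta/2}$ precisely because $\rho<1$. This is the content of Schmidt's Lemma~13 and is not a cosmetic choice: the subsequence must be dense enough that consecutive ratios tend to $1$, yet sparse enough that the Borel--Cantelli sum converges.

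Your final step, lifting from the unit cone $C$ to all of $\GSp^+(2n,\IR)/\GSp^+(2n,\IZ)$, is also slightly off: the discrepancy is not literally scaling-invariant. The paper instead rescales each cone $C_L$ to $C$, applies the result to the dilated family $L^{-1/n}B_t$ (which still satisfies the hypotheses), and takes a countable intersection over $L\in\IN$. This is what you should say in place of the invariance claim.
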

From the bound 

$$\int_C \left(\det(g)\widehat{\chi_B}(g\IZ^2) - \frac{\vol(B)}{\zeta(2n)}\right)^2d\mu(g)\leq C\vol(B)^{2-2\delta}$$
we get via Chebychev's inequality
\begin{equation}
  \label{eq:33}
  \mu\left(\Lambda\in C: \left|\det(\Lambda)\cdot (\#\Lambda_\prim\cap B) - \frac{\vol(B)}{\zeta(2n)}\right| > k\vol(B)^{1-\delta}\right)\leq \frac{1}{k^2}
\end{equation}
which we rearrange as 
\begin{equation}
  \label{eq:34}
  \mu\left(\Lambda\in C: \left|\frac{(\#\Lambda_\prim\cap B)}{\vol(B)} - \frac{1}{\det(\Lambda)\zeta(2n)}\right| > k\vol(B)^{-\delta}\right)\leq \det(\Lambda)^2\frac{1}{k^2};
\end{equation}
with $k=\vol(B)^{\frac{\delta}{2}}$ and the definition of the discrepancy function, we get
\begin{equation}
  \label{eq:35}
    \mu\left(\Lambda\in C: D(B,\Lambda) > \vol(B)^{-\delta/2}\right)\leq \det(\Lambda)^2\frac{1}{\vol(B)^\delta}.
\end{equation}
Now consider the family $B_t$ and let $l\in\IZ^+$ large.  Choose a subsequence $B_{m(l)}=B'_l$ of volumes $v(l) = l^{\frac{1+\rho}{\delta}}$ for any fixed $\rho$ with $0<\rho<1$.

Then by Borel-Cantelli we see that for almost all lattices $\Lambda\in C$, we have 
\begin{equation}
  \label{eq:36}
  D(B'_l,\Lambda) \leq C(\Lambda,n)\vol(B'_l)^{-\delta/2}
\end{equation}
for some constant $C(\Lambda,n)$ depending only on the (covolume of) the lattice and the dimension.

For the remaining elements $B_t$ in the sequence, choose $l$ such that $$l^{\frac{1+\rho}{\delta}}\leq v_t\leq (l+1)^{\frac{1+\rho}{\delta}}$$ and use the inclusions to interpolate the discrepancy of $B_t$ between the two discrepancies $D(B'_l,\Lambda)$ and $D(B'_{l+1},\Lambda)$:
\begin{eqnarray*}
D(B_t,\Lambda)  &\leq &  \max(\left(\frac{l+1}{l}\right)^{\frac{1+\rho}{\delta}}
\left[D(B'_l,\Lambda) + \left(\left(\frac{l}{1+l}\right)^{\frac{1+\rho}{\delta}}-1\right)\right],\\ & &
 \left(\frac{l+1}{l}\right)^{\frac{1+\rho}{\delta}}\left[D(B'_{l+1},\Lambda) + \left(\left(\frac{l+1}{l}\right)^{\frac{1+\rho}{\delta}}-1\right)\right]).
\end{eqnarray*}
Note that the coefficients are bounded by $2^{\frac{1+\rho}{\delta}}$ and the additive error is of the order $l^{-1}$; since $\rho<1$, we see $l^{\frac{2}{\delta}}>l^{\frac{1+\rho}{\delta}}=\vol(B'_l)$ so $$l^{-1} = (l^{\frac{2}{\delta}})^{-\delta/2}< \vol(B'_l)^{\frac{-\delta}{2}}$$
giving 
\begin{equation}
  \label{eq:40}
  D(B_t,\Lambda) \leq 2^{\frac{1+\rho}{\delta}+1}\vol(B'_l)^{-\delta/2} \leq C(\Lambda,n,\delta)v_t^{-\delta/2}.
\end{equation}
 as $t\to \infty$.

Finally, consider the cone $C_L$ of lattices $\Lambda\in \GSp(2n,\IR)/\GSp(2n,\IZ)$ with $0<\det(\Lambda)\leq L$ for $L\in \IN$.  Then $\frac{1}{L^{1/n}}\Lambda$ lies in the unit cone; applying the results above to the dilated family $L^{-\frac{1}{n}}B_t$ which again satisfies all the hypotheses as the original $B_t$ we get that for almost every $\Lambda \in C_L$ we have the same discrepancy bounds as before (the $\delta$ does not change); taking the intersection of these countable co-null sets in $C_L$ for $L=1,2,\cdots$ we see that for almost every lattice in $\GSp^+(2n,\IR)/\GSp(2n,\IZ)$ we have $D(B_t,\Lambda) = O(\vol(B_t)^{-\delta})$ for some $\delta>0$.

\end{document}